\newtheorem{thmA}{Theorem}
\newtheorem{corA}{Corollary}
\newtheorem{theorem}{Theorem}[section]
\newtheorem{thm}[theorem]{Theorem}
\newtheorem{lemma}[theorem]{Lemma}
\newtheorem{cor}[theorem]{Corollary}
\newtheorem{prop}[theorem]{Proposition}
\theoremstyle{definition}
\newtheorem{defn}[theorem]{Definition}
\theoremstyle{remark}
\newtheorem{remark}[theorem]{Remark}
\numberwithin{equation}{section}
 \def\cat{{\rm{CAT}}$(0)$ }
\def\n{\hbox{\bf{n}}} 
\def\T{\mathcal T}
\def\ssm{\smallsetminus} 
\def\R{\mathbb R} 
\def\E{\mathbb E}
\def\C{\mathbb C}
\def\Si{\Sigma}
\def\L{{\rm{Lick}}}
\def\S{\Sigma}
\def\modsg{{\rm{Mod}}(\Sigma_g)}
\def\spg{{\rm{Sp}}(2g,\Z)}
\def\Mod{{\rm{Mod}}}
\def\fix{\hbox{{\rm{Fix}}}}
\def\L{\Lambda}
\def\<{\langle}
\def\>{\rangle} 
\def\-{\underline}
\def\N{\mathcal N} 
\def\F{\mathcal F} 
\def\Z{\mathbb Z} 
\def\G{\Gamma}
\def\a{\alpha}
\def\b{\beta}
\def\g{\gamma}
\def\-{\overline}
\def\FR{\text{\rm{F}}\mathbb R}
\def\isom{\text{\rm{Isom}}} 
\def\ball{\text{\rm{Ball}}}
\def\serieslogo@{\relax}
\def\@setcopyright{\relax}
\begin{document}

\title[On the dimension of  CAT$(0)$ spaces where mapping class groups act]
{On the dimension of  CAT$(0)$ spaces where mapping class groups act}
\author[Martin R. Bridson]{Martin R.~Bridson}
\address{Mathematical Institute,
24-29 St Giles',
Oxford OX1 3LB, U.K. }
\email{bridson@maths.ox.ac.uk} 


\subjclass{20F67, 57M50}

\keywords{mapping class groups, {\rm{CAT}}$(0)$ spaces, dimension,
fixed-point theorems}

\thanks{This research was supported by a Senior
Fellowship from the EPSRC and a Royal
Society Wolfson Research Merit Award.}

\begin{abstract} 
Whenever the mapping class group of a
closed orientable surface of genus $g$ acts by semisimple isometries
on a complete {\rm{CAT}}$(0)$ space of dimension less than $g$ it fixes a point.
\end{abstract}

\maketitle
  
\section{Introduction}  There are at least two interesting actions of
the mapping class group $\modsg$ of a closed orientable surface $\S_g$   
on a complete \cat space. The most classical of these actions comes from the action of 
$\modsg$ on the first homology of $\S_g$: the resulting epimorphism $\modsg\to \spg$ 
gives an action  of $\modsg$ on the Siegel upper half space
${\rm{\bf H}}_g$, which is
the symmetric space for the symplectic group ${\rm{Sp}}(2g,\R)$.
In this action the Torelli group acts trivially
while the Dehn twists in non-separating curves act as neutral parabolics. 
 
The natural 
action of $\modsg$ on its Teichm\"uller space $\T_g$ gives rise to the second
action. 
$\modsg$ preserves the Weil-Petersson metric $d_{\rm{WP}}$
on $\T_g$, which has non-positive curvature but is not complete \cite{wolp1,
wolp2}. 
The action of $\modsg$ on the 
completion of $(\T_g,d_{\rm{WP}})$  is faithful if
$g>2$ and is by semisimple
isometries; all of the Dehn twists act elliptically, i.e.~have fixed points.
(See \cite{mrb:bill, ursula} for more details.)  If $g\ge 3$ then $\modsg$ cannot act
properly by semisimple isometries on a complete \cat space \cite{KL}, \cite{BH} p.257.

The (real) dimension of ${\rm{\bf H}}_g$ is $g(2g+1)$
 while the dimension of $\T_g$ is $6g-6$.
I do not know if $\modsg$ can act by semisimple 
isometries on a complete {\rm{CAT}}$(0)$ space whose dimension is less than $6g-6$
without fixing a point. 
The main purpose of this note is to prove the following weaker bound, which was announced
in \cite{mrb:bill}. 

\begin{thmA} \label{t:main}
Whenever $\modsg$  acts by semisimple
isometries on a complete {\rm{CAT}}$(0)$ space of dimension less than $g$,
it must fix a point.
\end{thmA}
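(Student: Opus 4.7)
The plan is to first show that every Dehn twist along a non-separating simple closed curve on $\Sigma_g$ acts elliptically on $X$, and then to produce a common fixed point from the fixed-point sets of a finite generating set of such twists.

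First I would fix $g$ pairwise-disjoint, pairwise non-isotopic, non-separating simple closed curves $c_1,\dots,c_g$ on $\Sigma_g$ (for instance, a system whose complement is a $2g$-holed sphere). The corresponding Dehn twists $T_i:=T_{c_i}$ pairwise commute and freely generate a subgroup $A\cong\Z^g$ of $\Mod(\Sigma_g)$. Since every element of $\Mod(\Sigma_g)$ acts by semisimple isometries on $X$, the structure theorem for abelian groups of semisimple isometries of a complete \cat space (Bridson--Haefliger II.7) applies: $\Min(A)=\bigcap_i\Min(T_i)$ is nonempty, closed, convex, $A$-invariant, and splits equivariantly as $Y\times\E^n$, with $A$ translating the $\E^n$-factor via a homomorphism $\tau:A\to\R^n$. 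The dimension hypothesis gives $n\le\dim\Min(A)\le\dim X<g$.

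An inductive rank argument shows that if $\Z^m$ acts by semisimple isometries on a finite-dimensional complete \cat space $Z$ with every nontrivial element hyperbolic, then $m\le\dim Z$: in the splitting $Y\times\E^n$ of $\Min(\Z^m)$, the kernel of the translation homomorphism acts by hyperbolic semisimple isometries on $Y$ with rank $\ge m-n$ and $\dim Y\le\dim Z-n$, so induction applies. Applied to $A$ with $\dim X<g$, this forces some nontrivial element of $A$ to be elliptic on $X$. The crux is then to upgrade this to ``every non-separating Dehn twist is elliptic'': all the $T_i$ are mutually conjugate in $\Mod(\Sigma_g)$ and hence share a common translation length $\ell\ge 0$ on $X$, and by varying the cut system $\{c_1,\dots,c_g\}$ to contain a prescribed non-separating twist, one aims to pin down $\ell=0$.

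With every non-separating Dehn twist acting elliptically, I would take a generating set for $\Mod(\Sigma_g)$ consisting of such twists (e.g.~the Humphries $2g+1$ generators) and assemble a common fixed point. For a pair of generators on disjoint curves, the twists commute and are both elliptic, so the minset theorem applied to the commuting pair yields a common fixed point. For a pair supported on curves meeting once, the braid relation $T_aT_bT_a=T_bT_aT_b$ combined with ellipticity of both sides forces the fixed sets to meet. A Helly-type theorem for finitely many closed convex subsets of a complete \cat space of finite dimension then promotes this pairwise information to a point fixed by all generators, and hence by $\Mod(\Sigma_g)$.

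The main obstacle, as I see it, is the middle paragraph: bootstrapping ``some element of $A$ is elliptic'' to ``every non-separating Dehn twist is elliptic.'' The elliptic element initially produced is a multi-twist $\prod T_i^{k_i}$, and extracting ellipticity of an individual $T_i$ requires combining the conjugacy of non-separating twists with the freedom to vary the cut system; this is where the dimension inequality $\dim X<g$ is used essentially. The final Helly-style synthesis, and the treatment of braid-related pairs of generators, are technical but of a more standard character.
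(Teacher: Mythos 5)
Your overall architecture (show the Dehn twists are elliptic, then assemble a global fixed point via a Helly-type argument) matches the paper's, but two of your three steps have genuine gaps, and the most important one is handled in the paper by a much shorter argument that you missed.

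\textbf{Ellipticity of Dehn twists.} Your lattice argument correctly shows that \emph{some} nontrivial multi-twist $\prod T_i^{k_i}$ in $A\cong\Z^g$ is elliptic, but as you acknowledge, this does not give ellipticity of any individual $T_i$, and the proposed fix (conjugacy of non-separating twists plus varying the cut system) is not an argument. For instance $\Z^g$ acting by translations on a generic lattice in $\E^{g-1}$ has a rank-one kernel yet all generators are hyperbolic, and nothing in your setup rules out an analogous configuration after varying cut systems. The paper sidesteps all of this: Proposition~\ref{l:hyp} shows that a hyperbolic isometry has infinite order in the abelianisation of its centraliser, while for $g\ge 3$ the abelianisation of the centraliser of a Dehn twist in $\modsg$ is finite (cf.~\cite{korkmaz}); so under the semisimplicity hypothesis a Dehn twist is automatically elliptic. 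This uses no dimension bound at all, and it applies to \emph{every} Dehn twist, separating or not.

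\textbf{Helly synthesis.} Your final step is not correct as stated. Theorem~\ref{t:helly} (the Helly-type theorem used in this paper) controls the nerve of a family of closed convex sets in a $d$-dimensional \cat\ space: to conclude the nerve is a full simplex you need every $(d+1)$-tuple to intersect, not every pair. Pairwise intersection of the $\fix(T_i)$ would suffice only when $\dim X\le 1$, which reduces to the $\R$-tree case. Moreover your claim that ellipticity of $T_a$, $T_b$ together with the braid relation forces $\fix(T_a)\cap\fix(T_b)\neq\emptyset$ is unsubstantiated; the braid relation constrains translation lengths but does not by itself force overlapping fixed sets. The paper instead proves (Lemma~\ref{l:2loops}) that when $|a\cap b|=1$ the whole genus-one subgroup $M_W$ containing $T_a,T_b$ has a fixed point, using the $g$ commuting conjugates of $M_W$, the torsion generation of ${\rm SL}(2,\Z)$, and Corollary~\ref{c:torsion}; then Theorem~\ref{t:technical} runs an induction over subsets of the Lickorish generating set with the Conjugate Bootstrap (Corollary~\ref{c:conjug}) and the subsurface-packing estimates of Proposition~\ref{p:size} and Lemma~\ref{l:fit}. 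This inductive bootstrap, not a single pairwise-Helly application, is where the bound $\dim X<g$ does its work.

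Until the gap in Step~2 is filled by some argument (the paper's Proposition~\ref{l:hyp} route being the natural one), and Step~3 is replaced by an actual bootstrap giving $(d+1)$-fold intersections, the proposal does not prove the theorem.
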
 

If $X$ is a polyhedral space with only finitely many isometry types of cells, then
all cellular
isometries of $X$ are semisimple \cite{mrb:ss}. Thus Theorem \ref{t:main}
implies, in particular, that $\modsg$ cannot act without a global fixed
point on a Bruhat-Tits building of affine type whose rank is less than $ g$. (In the case
of trees, this was already known by Culler and Vogtmann \cite{CV}.)
Actions on affine buildings arise from linear
representations in a manner that we recall in Section \ref{s:reps}.
 Thus,  as Farb points out in \cite{farb} p.44, control over such actions leads to
constraints on the low-dimensional representation theory of $\modsg$.

\begin{corA}\label{c:evalues} Let  $K$ be a
field and let $\rho:\modsg\to {\rm{GL}}(g,K)$ be a representation.
If $K$ has characteristic $0$ then the eigenvalues of each $\rho(\gamma)\ (\gamma\in\G)$ 
are algebraic integers, and if $K$ has positive characteristic they are roots of
unity.
\end{corA}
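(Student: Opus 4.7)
My plan is to deduce Corollary \ref{c:evalues} from Theorem \ref{t:main} by showing that any ``bad'' eigenvalue of some $\rho(\g)$ would force $\modsg$ to act without a global fixed point on a low-dimensional Bruhat-Tits building, contradicting the theorem. So suppose some eigenvalue $\lambda\in\bar K$ of $\rho(\g_0)$ is not an algebraic integer (when $\mathrm{char}\,K=0$) or not a root of unity (when $\mathrm{char}\,K=p>0$). Since $\modsg$ is finitely generated, one may replace $K$ by the subfield generated over the prime field by the matrix entries of $\rho$ on a finite generating set of $\modsg$, together with $\lambda$ and its Galois conjugates; so I may assume that $K$ is finitely generated over its prime field and contains $\lambda$.

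The key first step, which I expect to be the main technical obstacle, is to produce a non-trivial discrete non-archimedean valuation $v$ on $K$ (after a further finite extension if necessary) with $v(\lambda)\neq 0$. In positive characteristic the hypothesis forces $\lambda$ to be transcendental over $\mathbb F_p$, since every nonzero element of $\bar{\mathbb F}_p$ is a root of unity; the $\lambda$-adic valuation on $\mathbb F_p(\lambda)$ then extends suitably to $K$. In characteristic zero, a standard fact (traceable to Bass) asserts that an element of a finitely generated extension of $\mathbb Q$ is an algebraic integer if and only if it is integral at every $\mathbb Z$-valued discrete valuation of the field; applied to $\lambda$, this produces the required $v$ directly.

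Let $F$ denote the completion of $K$ at $v$. The group $\mathrm{PGL}(g,F)$ acts simplicially on its affine Bruhat-Tits building $B$, a complete \cat simplicial complex of dimension $g-1$ with only finitely many isometry types of cells. Composing $\rho$ with the projection $\mathrm{GL}(g,F)\to\mathrm{PGL}(g,F)$ yields an action of $\modsg$ on $B$; by the observation following Theorem \ref{t:main} this cellular action is by semisimple isometries, and since $g-1<g$, Theorem \ref{t:main} provides a global fixed vertex $x\in B$. The stabilizer of $x$ in $\mathrm{PGL}(g,F)$ is conjugate to $\mathrm{PGL}(g,\mathcal O_F)$, so after replacing $\rho$ by a suitable conjugate in $\mathrm{GL}(g,F)$ one may write $\rho(\g)=c_\g\,h_\g$ for every $\g\in\modsg$, with $c_\g\in F^\times$ a scalar and $h_\g\in\mathrm{GL}(g,\mathcal O_F)$. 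The eigenvalues of $h_{\g_0}$ are $v$-adic units, so $v(\lambda)=v(c_{\g_0})$; taking determinants gives $g\cdot v(c_{\g_0})=v(\det\rho(\g_0))$. But $\modsg$ has finite abelianization, so $\det\rho(\g_0)$ is a root of unity in $F^\times$, hence $v(\det\rho(\g_0))=0$, and therefore $v(\lambda)=0$, contradicting the choice of $v$. Once the valuation $v$ is in hand, everything downstream is a direct application of Theorem \ref{t:main} together with standard properties of Bruhat-Tits buildings and of the abelianization of $\modsg$.
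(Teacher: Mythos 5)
Your overall strategy --- pass to a finitely generated coefficient field, find a discrete valuation, and play $\Mod(\S_g)$ against the Bruhat--Tits building of $\mathrm{GL}_g$ via Theorem~\ref{t:main} --- is exactly the one used in the paper, which proves the more general Proposition~\ref{p:serre} (any finitely generated group with the fixed-point property for semisimple actions on complete CAT$(0)$ spaces of dimension $<n$ has all eigenvalues of $n$-dimensional representations integral over $\Z$). Where you differ is in the bookkeeping, and a couple of your shortcuts deserve comment.

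First, a genuine (if small) gap: Theorem~\ref{t:main} gives a \emph{fixed point} of the building, not a \emph{fixed vertex}. You assert ``a global fixed vertex $x\in B$'' and immediately identify its stabilizer in $\mathrm{PGL}(g,F)$, but a simplicial action fixing an interior point of a simplex $\sigma$ need only stabilize $\sigma$ --- in $\mathrm{PGL}(g,F)$, which does not act type-preservingly, the vertices of $\sigma$ can be permuted cyclically. The paper avoids this by first observing that finite abelianization forces $\rho(\modsg)\subset\ker(v\circ\det)$, the type-preserving subgroup, for which a fixed point automatically gives a pointwise-fixed simplex and hence a fixed vertex with stabilizer conjugate to $\mathrm{GL}(g,\mathcal O_v)$. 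You do invoke finite abelianization, but only \emph{after} the fixed-vertex claim, to control $\det\rho(\gamma_0)$; moving that observation to the front would both repair the gap and render your $F^\times\cdot\mathrm{GL}(g,\mathcal O_F)$ decomposition unnecessary, since $\rho(\modsg)$ then lands directly in a conjugate of $\mathrm{GL}(g,\mathcal O_v)$.

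Second, you argue by contradiction, constructing a \emph{single} valuation $v$ with $v(\lambda)\neq 0$ --- using the $\lambda$-adic valuation in characteristic $p$, and the Bass/EGA characterization of algebraic integers in characteristic $0$. The paper instead runs the building argument for \emph{every} discrete valuation $v$ on $k_\rho$ simultaneously and then quotes the fact (EGA, Cor.~7.1.8) that $\bigcap_v\mathcal O_v$ is the integral closure of $\Z$ in $k_\rho$; the characteristic-$0$ and characteristic-$p$ conclusions then fall out together from ``integral over $\Z$.'' Your split handling of the two characteristics is fine and perhaps more concrete, but it is the same underlying fact being invoked. In short: same mechanism, reorganized proof, with one ordering issue (finite abelianization before, not after, the fixed-vertex step) that needs to be fixed.
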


\begin{corA}\label{c:reps} If $K$ is an algebraically closed
field and $d\le g$, then there are only finitely many conjugacy classes
of irreducible representations  $\modsg\to {\rm{GL}}(d,K)$.
\end{corA}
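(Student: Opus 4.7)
The plan is to reduce the corollary to a finiteness statement for characters and then exploit Theorem \ref{t:main} via Bruhat--Tits buildings. Since $\modsg$ is finitely presented, the representation scheme $\mathrm{Hom}(\modsg, \mathrm{GL}(g))$ is an affine scheme of finite type over $\mathbb{Z}$, and its GIT quotient by the conjugation action of $\mathrm{GL}(g)$ is the character scheme $\mathcal{X}$, also of finite type. By Procesi's classical theorem, the conjugacy class of an irreducible representation $\rho$ is determined by its character, which in turn is pinned down by the trace values $\chi_\rho(w_1), \ldots, \chi_\rho(w_N)$ on a fixed finite family of words $w_i$ in a generating set for $\modsg$. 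So it suffices to show that the vector $(\chi_\rho(w_1), \ldots, \chi_\rho(w_N))$ ranges over a finite subset of $K^N$ as $\rho$ varies.

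To bound these trace values I would pair Corollary \ref{c:evalues} with a non-archimedean application of Theorem \ref{t:main}. For any non-archimedean valuation $v$ on the finitely generated subfield $F \subset K$ containing the matrix entries of $\rho$, the affine Bruhat--Tits building for $\mathrm{SL}(g, F_v)$ is a complete \cat space of dimension $g-1 < g$. The induced action of $\modsg$ is cellular, hence semisimple by the remark recorded after Theorem \ref{t:main}, so Theorem \ref{t:main} supplies a fixed vertex; this conjugates $\rho(\modsg)$ into the maximal compact subgroup $\mathrm{GL}(g, \mathcal{O}_v)$, and in particular every $\chi_\rho(w_i)$ is $v$-integral. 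In positive characteristic, Corollary \ref{c:evalues} further constrains each eigenvalue of $\rho(w_i)$ to be a root of unity; bounding its order through the finite-type structure of $\mathcal{X}$ (together with the braid and chain relations among Dehn twists, which translate into algebraic constraints on the characters) leaves only finitely many options. In characteristic $0$, each $\chi_\rho(w_i)$ is an algebraic integer that is $v$-integral at every non-archimedean $v$, and a Northcott-type height argument closes the proof once the archimedean sizes and the degree of the trace field are controlled.

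The delicate step will be the characteristic zero case at archimedean places: the symmetric space of $\mathrm{GL}(g, \mathbb{R})$ has real dimension $g(g+1)/2 \geq g$ for $g \geq 2$, so Theorem \ref{t:main} does not directly produce a fixed point there and does not by itself bound archimedean absolute values. I expect this obstacle to be dissolved by combining the finite-type structure of $\mathcal{X}$ over $\mathbb{Z}$, which bounds the degree of the trace field of $\rho$, with the abundance of torsion in $\modsg$ (e.g.\ hyperelliptic involutions and periodic elements coming from chain relations), whose images necessarily have unit-circle eigenvalues at every archimedean place; making this archimedean control precise enough to feed into Northcott's theorem is the technical heart of the proof.
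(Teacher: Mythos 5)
Your proposal attempts to reprove the finiteness from scratch, whereas the paper simply derives it from Proposition~\ref{p:serre} (integrality of eigenvalues over every field) by citing Bass, Proposition~5.3 of \cite{bass}. The attempt contains exactly the gap you yourself flag: controlling trace values $v$-adically at non-archimedean valuations does not bound them, and the promised ``Northcott-type height argument'' is never made to work. The torsion-element observation constrains only the archimedean absolute values of eigenvalues of those particular torsion elements, not the traces $\chi_\rho(w_i)$ of arbitrary words $w_i$; and the finite-type structure of the character scheme over $\Z$ does not by itself bound the degree of the trace field of a given representation, so neither the degrees nor the heights are controlled and the Northcott step cannot be invoked.

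The idea you are missing --- precisely what Bass exploits, and what renders any archimedean analysis unnecessary --- is that the integrality conclusion is available for representations over \emph{every} field simultaneously, not merely the particular $K$ at hand. Let $T$ be the subring of the character ring generated over $\Z$ by the trace functions of words; by Procesi's theorem it is a finitely generated $\Z$-algebra. Every ring homomorphism $T\to\Omega$ into an algebraically closed field that arises from a representation has image integral over $\Z$, by Proposition~\ref{p:serre}. This forces $T$ modulo its nilradical to be integral over $\Z$; being also a finitely generated $\Z$-algebra, it is then a finitely generated $\Z$-module. A ring finite over $\Z$ admits only finitely many homomorphisms to any fixed algebraically closed field, so there are only finitely many characters of irreducible representations, and hence, again by Procesi, only finitely many conjugacy classes. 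No heights, no archimedean places, and no appeal to torsion are needed.
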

  
The value of these corollaries should be weighed against the
fact that no representations with infinite image are known to exist in such low degrees.
Indeed it seems plausible that there are no such representations below dimension $2g$.
Using our results, Louis Funar \cite{funar} recently proved that every homomorphism
$\modsg\to{\rm{SL}}(\lfloor\sqrt{g+1}\rfloor, \C)$ has finite image.

In the proof of Theorem \ref{t:main}, the semisimplicity hypothesis
is used to force the Dehn twists to have fixed points.  
 There are two useful
ways in which the semsimplicity assumption can be weakened. First,  it is enough to assume 
only that the action has
 no neutral parabolics (see Section \ref{s:last}). Alternatively, one can
simply assume  that the Dehn
twist in a certain type of separating loop has a fixed point (Theorem 5.2). By exploiting
this second observation we prove the following theorem.

Recall that $\gamma_3(G)$, the third term of  lower central series
of a group $G$ is defined by setting  $\gamma_2(G)=[G,G]$ and
$\gamma_3(G)=[G,\gamma_2(G)]$. So $G/\gamma_3(G)$
is the freeest possible 2-step nilpotent quotient of $G$, and
there
is a natural map ${\rm{Out}}(G)\to {\rm{Out}}(G/\gamma_3(G))$.
In the case $G=\pi_1\S_g$, Morita \cite{morita} (cf.~\cite{jo})
 has identified the
image of the map $\modsg \cong {\rm{Out}}_+(G)\to {\rm{Out}}(G/\gamma_3(G))$; it is isomorphic to
an explicit  semidirect product $A\rtimes {\rm{Sp}}(2g,\Z)$ with $A$ abelian. 

\begin{thmA}\label{t:K_g}
The image of $\modsg \to {\rm{Out}}(\pi_1\S_g/\gamma_3(\pi_1\S_g))$ fixes a point whenever it acts by isometries on a
complete {\rm{CAT}}$(0)$ space of dimension less than $g$.
\end{thmA}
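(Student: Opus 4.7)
The plan is to reduce Theorem~B to the strengthening of Theorem~A alluded to in the paragraph immediately preceding its statement (namely, the observation that it suffices to assume the Dehn twist in a single separating curve of a certain type has a fixed point). Let $G$ denote the image of $\modsg$ in ${\rm{Out}}(\pi_1\S_g/\gamma_3(\pi_1\S_g))$, and suppose $G$ acts by isometries on a complete {\rm{CAT}}$(0)$ space $X$ of dimension less than $g$. Pulling this action back along the natural surjection $\modsg \to G$ yields an action of $\modsg$ on $X$ in which every element of the kernel of $\modsg \to G$ fixes $X$ pointwise.

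The next step is to identify this kernel. Composing $\modsg \to {\rm{Out}}(\pi_1\S_g/\gamma_3)$ with the canonical quotient $\pi_1\S_g/\gamma_3 \to \pi_1\S_g/\gamma_2 \cong H_1(\S_g,\Z)$ recovers the symplectic representation $\modsg \to \spg$, so the kernel is contained in the Torelli group $\mathcal{I}_g$. On $\mathcal{I}_g$, the induced map is exactly the Johnson homomorphism, whose kernel is the Johnson kernel $\mathcal{K}_g$. By Johnson's generation theorem, $\mathcal{K}_g$ is generated by Dehn twists along separating simple closed curves. Hence every separating Dehn twist acts trivially on $X$, and in particular has a fixed point.

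Applying the strengthened form of Theorem~A to the pulled-back action of $\modsg$ then produces a global fixed point for $\modsg$ on $X$. Any such point is automatically fixed by the quotient $G$, which yields the conclusion of Theorem~B.

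The main obstacle I anticipate is matching the specific class of separating curves whose Dehn twists are required to have fixed points in the strengthening of Theorem~A with the class covered by Johnson's theorem. Since Johnson's result applies to twists along \emph{all} non-trivial separating simple closed curves on $\S_g$, and Section~\ref{s:last} presumably formulates its hypothesis in terms of some such curve, the matching should be essentially automatic; everything else rests on Morita's identification of the image in ${\rm{Out}}(\pi_1\S_g/\gamma_3)$ and on the machinery already built up to prove Theorem~A.
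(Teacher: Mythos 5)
Your proposal is correct and follows essentially the same route as the paper: observe that Dehn twists in separating curves act trivially on $X$ once the action is pulled back to $\modsg$, so in particular the twist in a handle-separating loop has a fixed point, and then invoke Theorem~\ref{t:technical}. The only comment is that you use more machinery than necessary. The sole fact required is the elementary containment that separating twists lie in $\ker\bigl(\modsg\to {\rm{Out}}(\pi_1\Sigma_g/\gamma_3(\pi_1\Sigma_g))\bigr)$, which follows at once from the definitions; the paper explicitly remarks that Johnson's deeper theorem (that these twists \emph{generate} the kernel) is not needed. Your appeal to Morita's identification of the image, to the Johnson homomorphism, and to the generation theorem is therefore superfluous. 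There is also a small logical inversion in your phrasing: from ``the kernel $\mathcal K_g$ is generated by separating twists'' you infer ``hence every separating twist acts trivially on $X$,'' but that inference goes the wrong way --- the containment of separating twists in the kernel is the easy ingredient that your argument actually uses, and it is logically prior to, not a consequence of, the generation statement.
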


This paper is organised as follows. In Section \ref{s:1} we gather such
facts as we need concerning {\rm{CAT}}$(0)$ spaces and their isometries.
Section \ref{s:2} contains a variation on the {\em{ample
duplicaton criterion}} from \cite{mrb:helly}. This is a condition on generating
sets that leads to the existence of fixed points for group
actions via a bootstrap
procedure; it is a refinement of the 
{\em{Helly technique}}, cf.~\cite{farb:helly}. In 
Section \ref{s:lick}
 we establish the facts that we need in order to show that
the Lickorish generators for $\modsg$ satisfy this criterion.
Section \ref{s:proofs} contains the proofs of Theorems \ref{t:main}
and \ref{t:K_g}. Section \ref{s:reps} contains a discussion of Corollaries \ref{c:evalues}
and \ref{c:reps}.

I thank Tara Brendle for drawing the figure in Section \ref{s:lick} and Benson Farb for his comments
concerning Section \ref{s:reps}.

\section{Isometries of {\rm{CAT}}$(0)$ spaces}\label{s:1}

 Let $X$ be a  geodesic metric space.
A geodesic triangle $\Delta$ in $X$ consists of three points $a,b,c\in X$ and 
three geodesics $[a, b],\, [b,c],\, [c,a]$. Let $\-\Delta\subset\E^2$ be a triangle
in the Euclidean plane with the same edge lengths as $\Delta$
and let $\overline x\mapsto
x$ denote the map  $\-\Delta\to\Delta$
that sends each side of $\-\Delta$ isometrically
onto the corresponding side of $\Delta$. 
One says that {\em{$X$ is a 
{\rm{CAT}}$(0)$ space}} if for all $\Delta$ and all $\-x,\-y\in\-\Delta$ the inequality
 $d_X(x,y)\le d_{\E^2}(\-x, \-y)$ holds.

We refer to Bridson and Haefliger
\cite{BH} for basic facts about {\rm{CAT}}$(0)$ spaces.
 
In a {\rm{CAT}}$(0)$ space there is a unique geodesic $[x,y]$ joining
each pair of points $x,y\in X$.
A subspace $Y\subset X$ is said to be {\em{convex}} if 
$[y,y']\subset Y$ whenever $y,y'\in Y$.  

We write $\isom(X)$ for the group of isometries of a metric space $X$
and $\ball_r(x)$ for the closed ball of radius $r>0$ about $x\in X$.
Given a subset $H\subset\isom(X)$, we denote its
set of common  fixed-points
$$\fix(H):=\{x\in X\mid \forall h\in H,\  h.x=x\}.$$
Note that if $X$ is {\rm{CAT$(0)$}} then $\fix(H)$  is closed and convex.

The isometries of a {\rm{CAT}}$(0)$ space $X$ divide naturally into
two classes: the {\em{semisimple}} isometries are those for which there exists $x_0\in X$ such that
$d(\gamma.x_0, \, x_0) = |\g|$ where $|\g|:=\inf\{d(\gamma.y, \, y) \mid y\in X\}$; the remaining
isometries are said to be
{\em{parabolic}}. A parabolic $\g$ is {\em{neutral}} if $|\g|=0$.
 Semisimple isometries are divided into {\em{hyperbolics}}, for which $|\g|>0$,
and {\em{elliptics}}, which have fixed points.   
If $\gamma$  is hyperbolic then there exist $\gamma$-invariant isometric copies of $\R$
in $X$ on which $\gamma$ acts as a translation by $|\gamma|$. 
Each such subspace is called an axis for $\gamma$.

\begin{prop}\label{l:hyp} Let $\G$ be a group acting by
isometries on a complete \cat\ space $X$. If $\gamma\in\G$
acts as a hyperbolic isometry then $\gamma$ has infinite order in the abelianisation of
its centralizer $Z_\G(\gamma)$.
\end{prop}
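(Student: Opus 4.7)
The plan is to exploit the product structure of the min-set of a hyperbolic isometry and the fact that this structure is preserved by the centraliser, in order to construct a nonzero homomorphism from $Z_\G(\gamma)$ to $\R$ that is nonzero on $\gamma$.

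First I would recall the standard structure theorem (see Bridson--Haefliger \cite{BH}, II.6): if $\gamma$ is a hyperbolic isometry of a complete CAT$(0)$ space $X$, then its min-set
\[
\mathrm{Min}(\gamma)=\{x\in X\mid d(\gamma.x,x)=|\gamma|\}
\]
is a nonempty closed convex subspace which splits canonically as a metric product $Y\times\R$, where the $\R$-factors are precisely the axes of $\gamma$, and $\gamma$ acts as the identity on $Y$ and as translation by $|\gamma|$ on $\R$.

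Next, since any element $\alpha\in Z_\G(\gamma)$ commutes with $\gamma$, it preserves both $|\gamma|$ and the displacement function $x\mapsto d(\gamma.x,x)$; hence $\alpha$ stabilises $\mathrm{Min}(\gamma)$. Because the splitting $\mathrm{Min}(\gamma)=Y\times\R$ is canonical (the $\R$ factor is the union of axes of $\gamma$, equivalently the parallelism class of any one axis), $\alpha$ respects the product decomposition, so it acts as $(\alpha_Y,\alpha_\R)$ with $\alpha_\R\in\isom(\R)$. Any isometry of $\R$ commuting with the nontrivial translation by $|\gamma|$ must itself be a translation, so $\alpha_\R$ is translation by some real number $\tau(\alpha)$.

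This defines a homomorphism $\tau:Z_\G(\gamma)\to\R$ with $\tau(\gamma)=|\gamma|>0$. Since $\R$ is abelian and torsion-free, $\tau$ factors through the abelianisation $Z_\G(\gamma)^{\mathrm{ab}}$, and the image of $\gamma$ there has infinite order, as required.

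The only step that requires any care is verifying that the centraliser preserves the product splitting of $\mathrm{Min}(\gamma)$; I would cite \cite{BH} (II.6.8) rather than reprove it. Everything else reduces to the trivial observation that the isometry group of $\R$ commuting with a nontrivial translation is $\R$ itself.
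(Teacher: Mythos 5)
Your argument is essentially identical to the paper's: both exploit the canonical splitting $\mathrm{Min}(\gamma)\cong Y\times\R$, note that $Z_\G(\gamma)$ preserves this splitting, and produce a homomorphism to the abelian translation group of $\R$ on which $\gamma$ acts nontrivially. The only difference is cosmetic — you spell out why a commuting isometry of $\R$ must be a translation, whereas the paper states this implicitly by landing the homomorphism in $\mathrm{Isom}_+(\R)$.
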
 

\begin{proof} This is proved on page 234 of \cite{BH}.
The main points are these:  the union of the axes for $\gamma$ splits isometrically
as $Y\times\R\subset X$; this subspace
is preserved
by $Z_\G(\gamma)$, as is the splitting; the action on the
second factor gives a homomorphism from $Z_\G(\g)$ to the
abelian group ${\rm{Isom}}_+(\R)$ and  the image of $\g$
is non-trivial. 
\end{proof}

\begin{remark} \label{r:km}
A more subtle argument exploiting \cite{KM}
shows that non-neutral parabolics
have infinite image in the abelianisation of their centralizer. This
is explained  in the proof
of   \cite{mrb:bill} Theorem 1.
\end{remark}
   
The facts that we need about elliptic isometries rely on the following well-known
proposition (\cite{BH}, II.2.7).

Given a subspace $Y\subseteq X$, let $\rho(Y):=\inf\{r\mid 
Y\subseteq \ball_r(x), \text{ some }x\in X\}$.

\begin{prop}
If $X$ is a complete \cat space and $Y$ is a non-empty bounded
subset, then there is a unique point $c_Y\in X$ such that
$Y\subseteq \ball_{\rho(Y)}(c_Y)$.
\end{prop}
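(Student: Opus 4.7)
The plan is to apply the \emph{semi-parallelogram law} (the CN-inequality of Bruhat--Tits) that holds in every \cat space: for any three points $x,x',y\in X$, if $m$ denotes the midpoint of $[x,x']$, then
$$d(m,y)^{2}\le \tfrac{1}{2}d(x,y)^{2}+\tfrac{1}{2}d(x',y)^{2}-\tfrac{1}{4}d(x,x')^{2}.$$
This is a standard consequence of the \cat\ condition applied to the triangle with vertices $x,x',y$; it is the quantitative convexity tool that drives the argument.

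Taking the supremum over $y\in Y$, I would deduce the following key estimate: if $Y\subseteq \ball_r(x)\cap \ball_r(x')$, then
$$Y\subseteq \ball_{r'}(m), \qquad \text{where } (r')^{2}=r^{2}-\tfrac{1}{4}d(x,x')^{2}.$$
In particular $\rho(Y)\le r'$, so that $d(x,x')^{2}\le 4\bigl(r^{2}-\rho(Y)^{2}\bigr)$. Uniqueness of $c_Y$ is then immediate: two points that each satisfy $Y\subseteq \ball_{\rho(Y)}(\cdot)$ plug in with $r=\rho(Y)$ and must coincide.

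For existence I would choose a minimising sequence: points $x_n\in X$ and radii $r_n\searrow \rho(Y)$ with $Y\subseteq \ball_{r_n}(x_n)$. Applying the estimate above to $x_n,x_m$ with $r=\max(r_n,r_m)$ gives
$$d(x_n,x_m)^{2}\le 4\bigl(\max(r_n,r_m)^{2}-\rho(Y)^{2}\bigr)\longrightarrow 0,$$
so $(x_n)$ is Cauchy. Completeness of $X$ produces a limit $c_Y$, and continuity of the distance function together with $r_n\to\rho(Y)$ yields $Y\subseteq \ball_{\rho(Y)}(c_Y)$.

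The only real content is the semi-parallelogram inequality; once that is in hand the rest is a routine minimising-sequence argument, so I do not expect any serious obstacle. The one thing to double-check is that taking the supremum over $y\in Y$ is legitimate when $Y$ is not closed, but since $\ball_r(x)$ is closed and $Y\subseteq \ball_r(x)$ is equivalent to $\sup_{y\in Y}d(x,y)\le r$, this causes no trouble.
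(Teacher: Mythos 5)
Your proof is correct and is the standard circumcenter argument via the CN (semi-parallelogram) inequality; this is precisely the proof of Proposition II.2.7 in Bridson--Haefliger, which the paper cites for this statement without reproducing a proof. Nothing to add.
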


\begin{cor} Let $X$ be a complete \cat space. If $H<\isom(X)$ has
a bounded orbit then $H$ has a fixed point.
\end{cor}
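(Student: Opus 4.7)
The plan is to reduce the statement to the uniqueness assertion of the preceding proposition by taking the \emph{circumcenter} of a bounded orbit and checking it is preserved by $H$.

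Concretely, fix any $x_0\in X$ and let $Y:=H.x_0$, which is a non-empty bounded subset of $X$ by hypothesis. Apply the preceding proposition to obtain the unique point $c:=c_Y\in X$ with $Y\subseteq \ball_{\rho(Y)}(c)$. It then suffices to prove that $h.c=c$ for every $h\in H$.

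For the key step, note that $h.Y=Y$ because $h$ permutes the orbit of $x_0$ under $H$. Since $h$ is an isometry of $X$, it carries closed balls to closed balls of the same radius, so
$$h\bigl(\ball_{\rho(Y)}(c)\bigr)=\ball_{\rho(Y)}(h.c).$$
The left-hand side contains $h.Y=Y$, so $h.c$ is also the center of a closed ball of radius $\rho(Y)$ containing $Y$. The quantity $\rho(Y)$ is defined as the infimum of radii of enclosing balls and does not depend on a choice of center, so the uniqueness clause of the preceding proposition forces $h.c=c$.

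This argument is essentially immediate once the proposition is in hand, so there is no real obstacle; the only subtlety worth remarking on is that one must invoke uniqueness rather than merely existence of the circumcenter, and this in turn depends on the \cat hypothesis (the proposition would fail in a general complete geodesic space, e.g.~a sphere). Hence completeness and non-positive curvature enter through the single appeal to the preceding proposition.
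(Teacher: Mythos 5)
Your proof is correct and is exactly the paper's argument, just written out in detail: take the circumcenter of a bounded orbit and use the uniqueness in the preceding proposition to see that it is $H$-fixed. One tiny slip in phrasing: you write ``fix any $x_0\in X$'' and then call $H.x_0$ bounded ``by hypothesis,'' but the hypothesis only guarantees that \emph{some} orbit is bounded, so you should either choose $x_0$ with $H.x_0$ bounded or observe (via the triangle inequality and the isometry condition) that if one orbit is bounded then every orbit is.
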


\begin{proof} The centre $c_O$ of  any $H$-orbit $O$ will be a fixed
point.
\end{proof}

\begin{cor}\label{c:commutes}
 Let $X$ be a complete  {\rm{CAT}}$(0)$ space. If
the subgroups $H_1,\dots, H_\ell<\isom(X)$ commute and
$\fix(H_i)$ is non-empty for $i=1,\dots,\ell$, then 
$\bigcap_{i=1}^\ell \fix(H_i)$ is non-empty.
\end{cor}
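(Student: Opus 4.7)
The plan is to induct on $\ell$; the base case $\ell=1$ is exactly the hypothesis. For the inductive step, set $F := \bigcap_{i=1}^{\ell-1} \fix(H_i)$. By the inductive hypothesis $F$ is non-empty, and it is closed and convex as an intersection of closed convex sets (the closedness and convexity of each $\fix(H_i)$ was recorded just before the statement of the circumcenter proposition).

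The key observation is that $H_\ell$ preserves $F$. Indeed, for each $i<\ell$, every $h\in H_\ell$, every $h_i\in H_i$, and every $x\in\fix(H_i)$, the commutation relation gives
\[ h_i.(h.x) \;=\; h.(h_i.x) \;=\; h.x, \]
so $h.x\in\fix(H_i)$; intersecting over $i$ yields $h.F\subseteq F$.

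Now pick any $x\in\fix(H_\ell)$, which exists by hypothesis. Since $F$ is a non-empty closed convex subset of the complete \cat space $X$, there is a unique nearest point $y^{\ast}\in F$ to $x$ (a standard fact about complete \cat spaces; \cite{BH}, II.2.4). For every $h\in H_\ell$, the point $h.y^{\ast}$ again lies in $F$, and
\[ d(x,h.y^{\ast}) \;=\; d(h.x,h.y^{\ast}) \;=\; d(x,y^{\ast}), \]
so uniqueness forces $h.y^{\ast} = y^{\ast}$. Thus $y^{\ast}\in \bigcap_{i=1}^\ell \fix(H_i)$, completing the induction.

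There is no serious obstacle here: the argument uses only the inductive hypothesis, closedness and convexity of fixed-point sets of isometries, and the existence and uniqueness of nearest-point projection onto a closed convex subset of a complete \cat space. One could give a variant proof that replaces the projection with the circumcenter of Proposition 2.2, applied either to the $H_\ell$-orbit of a chosen point of $F$ (after verifying boundedness) or to the bounded set $F\cap \ball_{d(x,F)+\varepsilon}(x)$, but the projection approach is the most economical.
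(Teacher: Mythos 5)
Your proof is correct, and it takes a mildly different route from the paper's. Both arguments hinge on the same key observation — commutativity forces $H_\ell$ to preserve $F=\bigcap_{i<\ell}\fix(H_i)$ — but after that they diverge. The paper takes an arbitrary point of $\fix(H_1)$, observes that its $H_2$-orbit is bounded (because $\fix(H_2)\neq\emptyset$) and lies inside $\fix(H_1)$, and then invokes the circumcenter of Proposition 2.2 together with the non-expansiveness of projection (II.2.4 in \cite{BH}) to conclude the circumcenter lies in $\fix(H_1)$ and is fixed by $H_2$. You instead start from a point $x\in\fix(H_\ell)$ and use the \emph{unique nearest-point projection} $y^\ast$ of $x$ onto the closed convex set $F$: since $H_\ell$ fixes $x$ and preserves $F$, it must also fix $y^\ast$ by uniqueness. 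This sidesteps the circumcenter machinery entirely and uses only nearest-point projection, which is arguably the more economical version; the paper's circumcenter argument has the small virtue of being the same tool it has already set up and will reuse (e.g.\ in the corollary immediately preceding, and implicitly in the Helly-type arguments). Both are standard and both are correct.
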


\begin{proof} A simple induction reduces us to the case
$\ell=2$. Since $\fix(H_2)$ is non-empty, each $H_2$-orbit
is bounded. As $H_1$ and $H_2$ commute, $\fix(H_1)$ is
$H_2$-invariant and therefore contains an $H_2$-orbit.  Orthogonal
projection to a closed convex subspace, such as
$\fix(H_1)$, does not increase distances (\cite{BH}, II.2.4). Thus
 the centre of this $H_2$-orbit
is  in $\fix(H_1)$, providing us with a fixed point for $H_1\cup H_2$.
\end{proof}

\section{Nerves, fixed points, and bootstraps}\label{s:2}

In this section we isolate from \cite{mrb:helly} the ideas
needed to prove the version of the {\em{ample duplication criterion}}
that we need. 

We need the
 following standard device for encoding the intersections of
families of subsets.

\begin{defn} Let $X$ be a set. The {\em{nerve}} $\N$ of a family   of
subsets $F_\lambda \subseteq X\ (\lambda\in\Lambda)$ is the abstract simplicial complex with
vertex set $\Lambda$ that has a $k$-simplex with vertices
$\{\lambda_0,\dots,\lambda_k\}$ if and only if $\bigcap_{i=0}^kF_{\lambda_i}
\neq \emptyset$.

If the index set $\Lambda$
is $\n=\{0,\dots,n\}$, where $n\in\mathbb N$, then we
regard $\N$ as a subcomplex of the standard $n$-simplex $\Delta_n$.

We write $|\N|$ to denote the geometric realisation 
of $\N$.
\end{defn}

Helly's classical theorem    concerning the  intersection
of 
convex subsets in $\R^n$   can be formulated as follows:  {\em{Let $\{C_0,\dots,C_N\}$ be
a family of closed convex sets in $\R^n$. If the nerve $\N$ of this family
contains the full $n$-skeleton of $\Delta_N$, then $\N=\Delta_N$.}}
There are many variations on this theorem in the literature. The
following  special case of the one  proved in \cite{mrb:helly}
will suffice for our purposes.

\begin{thm}\label{t:helly} If $X$ is a complete
\cat space of dimension at most 
$d$ and $\N$ is the
nerve of a family of closed convex subsets of $X$, then
every continuous map $|\N|\to \mathbb S^d$ is homotopic to a 
constant map.
\end{thm}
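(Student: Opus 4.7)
The plan is to realise $|\N|$ inside $X$ up to homotopy using the CAT$(0)$ convex geometry, and then invoke the classical dimension-theoretic characterisation of $\dim X \le d$ via the extendability of $\mathbb S^d$-valued maps.

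The first step is to construct a continuous nerve realisation $\phi : |\N| \to X$ that sends each simplex $\sigma = [\lambda_0,\dots,\lambda_k]$ of $\N$ into the intersection $F_\sigma := F_{\lambda_0}\cap\cdots\cap F_{\lambda_k}$. By definition of $\N$ each such $F_\sigma$ is a non-empty closed convex subset of $X$; pick $p_\sigma \in F_\sigma$ for every $\sigma \in \N$ and pass to the barycentric subdivision $\N'$, setting $\phi(b_\sigma) := p_\sigma$ at each barycentre. A typical top simplex of $\N'$ has the form $[b_{\sigma_0},\dots,b_{\sigma_k}]$ with $\sigma_0 \subsetneq \cdots \subsetneq \sigma_k$, and all vertex images $p_{\sigma_i}$ lie in the convex set $F_{\sigma_0}$, which is contractible via geodesic retraction to a chosen basepoint. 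An iterated CAT$(0)$ convex-combination construction then extends $\phi$ continuously over each such simplex with image inside $F_{\sigma_0}$.

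The second step is a nerve theorem: every non-empty intersection among the $F_\lambda$ appearing in $\N$ is closed and convex, hence contractible, so $\{F_\lambda\}$ is a good cover of its union $Y := \bigcup_\lambda F_\lambda \subseteq X$. A standard nerve-theorem argument then promotes $\phi$ to a homotopy equivalence $|\N| \simeq Y$, reducing the claim to: every continuous map $Y \to \mathbb S^d$ is null-homotopic. The third step combines the dimension bound with contractibility. I would invoke the Hurewicz--Wallman characterisation of covering dimension: a normal space $X$ has $\dim X \le d$ if and only if every continuous map from a closed subspace into $\mathbb S^d$ extends over all of $X$. After arranging $Y$ --- or a closed deformation retract of a neighbourhood of $Y$ built from the CAT$(0)$ nearest-point projections onto the convex pieces $F_\lambda$ --- to be a closed subspace of $X$, any $\bar g : Y \to \mathbb S^d$ extends to some $\tilde g : X \to \mathbb S^d$. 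Since $X$ is contractible (being CAT$(0)$), $\tilde g$ is null-homotopic, hence so is $\bar g$, and therefore so is the original map $|\N| \to \mathbb S^d$.

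The main obstacle is the closedness issue in the third step: the raw image $\phi(|\N|)$ need not be closed in $X$, so Hurewicz--Wallman does not apply verbatim. Overcoming this requires a careful ANR-style argument leveraging the CAT$(0)$ projections onto closed convex sets in order to pass between $Y$ and a genuinely closed, homotopy-equivalent subspace of $X$ on which the extension argument can be executed cleanly. Once that technical point is discharged, the contractibility of $X$ immediately trivialises the extended map and completes the proof.
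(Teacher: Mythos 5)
The paper does not prove this theorem: it states that this is ``the following special case of the one proved in \cite{mrb:helly}'' and simply cites that preprint, so there is no internal proof to compare your argument against. Evaluating the proposal on its own terms:

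The overall strategy --- realise $|\N|$ in $X$, relate its topology to a closed subset of $X$, then invoke the Hurewicz--Wallman extension characterisation of covering dimension together with contractibility of $X$ --- is reasonable. Step~1 (the barycentric construction of $\phi:|\N|\to X$ with $\phi(\sigma)\subset F_\sigma$) is correct, and Step~3 is correct as stated once one has a closed $Y$ and a homotopy inverse, since a closed convex nonempty subset of a complete CAT$(0)$ space is always nonempty so the relevant intersections are genuinely nonempty, $\mathbb S^d$ is an ANR, and $X$ is an AR.

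The genuine gap is in Step~2, where you invoke ``a standard nerve-theorem argument'' to upgrade $\phi$ to a homotopy equivalence $|\N|\simeq Y$. The classical nerve theorem (Borsuk, Leray, Weil) is for \emph{open} covers, or for covers of a CW complex by subcomplexes. A nerve theorem for \emph{closed} covers is not standard and in general requires supplementary hypotheses (regularity, the pieces and their intersections being ANRs, some local finiteness, etc.). The naive attempt to fix this by passing to $\epsilon$-neighbourhoods $F_\lambda^\epsilon$ fails in CAT$(0)$ generality: two disjoint closed convex sets can come arbitrarily close (think of asymptotic geodesics in $\mathbb H^2$), so the nerve of $\{F_\lambda^\epsilon\}$ need not stabilise to the nerve of $\{F_\lambda\}$ as $\epsilon\to 0$. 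One can likely establish what is needed for \emph{finite} families of closed convex sets in a complete CAT$(0)$ space --- using that closed convex subsets are ARs and that the $1$-Lipschitz nearest-point projections $\pi_{F_\lambda}:X\to F_\lambda$ provide the deformation data --- but this is precisely the technical heart of the theorem and cannot be waved away. Also note you need only a left homotopy inverse $\rho:Y\to |\N|$ with $\rho\circ\phi\simeq\mathrm{id}$, not the full homotopy equivalence; isolating that weaker statement would make the task easier.

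Two further points. First, for a \emph{finite} family $Y$ is automatically closed (a finite union of closed sets), so the closedness worry in your Step~3 is not actually the issue there; the issue re-emerges only for infinite families. Second, the theorem is stated for arbitrary families, and the infinite case is untreated in your sketch: $Y=\bigcup_\lambda F_\lambda$ need not be closed, the closed nerve theorem is even more delicate for infinite covers, and the reduction to finite subcomplexes of $\N$ is obstructed by the phantom-map phenomenon ($[\,|\N|,\mathbb S^d\,]$ does not commute with the filtered colimit over finite subcomplexes, there is a $\varprojlim^1$ term). The paper only applies the theorem to finite families arising from the Lickorish generators, so in context the infinite case may be moot, but a proof of the theorem as stated would have to address it.
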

 
Our proof of Theorem \ref{t:main} exemplifies the fact that 
by applying versions of Helly's theorem
to fixed point sets of subgroups, one 
can sometimes prove fixed point theorems for groups of geometric
interest  (cf.~\cite{L}, \cite{gangof6}).
Forms of this idea appear at various places in the literature.
 I learned it from Benson Farb \cite{farb:helly}.

\subsection{Joins}
Let $K_1$ and $K_2$ be abstract
simplicial complexes
whose vertex sets $V_1$ and $V_2$ are disjoint.  The {\em{join}} of $K_1$ and $K_2$, denoted
$K_1\ast K_2$ is a simplicial
complex with vertex set  $V_1\sqcup V_2$;
a subset of $V_1\sqcup V_2$ is
a simplex\footnote{i.e. is the vertex set of a simplex in the geometric
realisation}  of
$K_1\ast K_2$ if and only if it is a simplex of
$K_1$, a simplex of $K_2$, or the union of a
simplex of $K_1$ and a simplex of $K_2$.
For example, the join of an $n$-simplex and an
$m$-simplex is an $(n+m+1)$-simplex.  Note that
$$
\dim (K_1\ast K_2) = \dim K_1 + \dim K_2 +1,
$$
provided $V_1$ and $V_2$ are non-empty. If the geometric realisation of $K_i$ is a sphere
of dimension $d_i$, for $i=1,2$, then the geometric realization of $K_1\ast K_2$ is a sphere of
dimension $d_1+d_2+1$; so $\partial\Delta_n\ast\partial\Delta_m$ is a triangulation of
$\S^{n+m-1}$.
 
\begin{prop}\label{c:join} Let $X$ be a complete {\rm{CAT}}$(0)$
space and let $S_1,\dots,S_\ell\subseteq \isom(X)$ be subsets
such that $[s_i,s_j]=1$ for all $s_i\in S_i, \, s_j\in S_j\ (i\neq j)$.
If $\N_i$ is the nerve of the family $\F_i=\{\fix(s_i)\mid s_i\in S_i\}$,
then the nerve $\N$ of $\F_1\cup\dots\cup\F_\ell$ is
$\N_1\ast\dots\ast\N_\ell$.
\end{prop}

\begin{proof} It is clear that $\N$ is contained in the join of
the $\N_i$; we must argue  that the converse is true, i.e.~that  
$\N$ has as a simplex  the union of  each
$\ell$-tuple of simplices  $\sigma_i\subset\N_i\ (i=1,\dots,\ell)$.
The $\sigma_i$
correspond to an $\ell$-tuple of commuting subgroups $H_i$ of 
$\isom(X)$, namely the subgroups generated by the elements  of
$S_i$ indexing the vertices of $\sigma_i$. The presence of $\sigma_i$
in $\N_i$ is equivalent to the statement that $\fix(H_i)$ is non-empty.
Corollary \ref{c:commutes}
then tells us $\fix(\cup_i H_i)=\cap_i\fix(H_i)$ is non-empty, as required.
\end{proof}

\begin{prop}[Bootstrap Lemma]\label{l:bootstrap}
Let $k_1,\dots,k_n$ be positive
integers and let $X$ be a complete \cat space of dimension less than $k_1+\dots+k_n$.
Let $S_1,\dots,S_n \subset\isom(X)$ be subsets with $[s_i,s_j]=1$
for all $s_i\in S_i$ and $s_j\in S_j\ (i\neq j)$.

If, for $i=1,\dots,n$, each $k_i$-element subset of $S_i$ has a 
fixed point in $X$, then for some $i$ every 
finite subset of   $S_i$ has a fixed point.
\end{prop}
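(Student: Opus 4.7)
My plan is proof by contradiction using the nerve construction together with Theorem \ref{t:helly}. Assume the conclusion fails: for each $i$ there is a finite subset of $S_i$ whose elements have no common fixed point. For each $i$, pick such a $T_i \subseteq S_i$ of minimal cardinality. The $k_i$-subset hypothesis forces $|T_i| \geq k_i + 1$, and by minimality every proper subset of $T_i$ has a common fixed point, so the nerve $\mathcal{N}_i$ of $\mathcal{F}_i := \{\fix(s) : s \in T_i\}$ is precisely the boundary $\partial \Delta_{T_i}$ of the simplex on $T_i$; its geometric realisation is a sphere of dimension $|T_i| - 2$.

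Next the cross-commutativity of the $S_i$'s combined with Corollary \ref{c:join} gives that the nerve $\mathcal{N}$ of $\mathcal{F}_1 \cup \cdots \cup \mathcal{F}_n$ equals the join $\mathcal{N}_1 \ast \cdots \ast \mathcal{N}_n$. Iterating $\mathbb{S}^a \ast \mathbb{S}^b = \mathbb{S}^{a+b+1}$, the realisation $|\mathcal{N}|$ is a sphere of dimension $N := \sum_i |T_i| - n - 1$, which is at least $\sum_i k_i - 1$ and hence at least $d := \dim X$. Theorem \ref{t:helly} now tells me that every continuous map $|\mathcal{N}| \to \mathbb{S}^d$ is null-homotopic, so to close the argument I must exhibit a non-null-homotopic continuous map $\mathbb{S}^N \to \mathbb{S}^d$.

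The cleanest case is $N = d$, when the identity serves: this occurs precisely when $|T_i| = k_i + 1$ for all $i$ and $\sum_i k_i = d + 1$. The main obstacle is the general case $N > d$, where the identity trick no longer applies. I plan to reduce to the clean case by two adjustments to the hypotheses, made before forming the $T_i$: first, decrementing the $k_i$'s (valid since a $k_i$-subset having a common fixed point forces the same for every $(k_i-1)$-subset) to arrange $\sum_i k_i = d + 1$; and second, using a Helly-type corollary of Theorem \ref{t:helly}, derived by a minimal-non-face argument, to the effect that in a $d$-dimensional complete CAT(0) space every minimal finite family of closed convex sets with empty common intersection has at most $d + 1$ members, so $|T_i| \leq d + 1$. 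Choosing $k_i := |T_i| - 1$ aligns the two adjustments, giving $|T_i| = k_i + 1$ and $\sum_i k_i = d + 1$; then $|\mathcal{N}| = \mathbb{S}^d$ and the identity map supplies the required non-null-homotopic map to contradict Theorem \ref{t:helly}.
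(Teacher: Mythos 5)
Your first two paragraphs track the paper's proof exactly: take minimal finite subsets $T_i \subset S_i$ with no common fixed point, observe that each nerve $\N_i$ is the boundary of a simplex, and use Corollary~\ref{c:join} to identify the total nerve with a join of sphere boundaries, hence a sphere of dimension $N = \sum_i |T_i| - n - 1 \ge \sum_i k_i - 1 \ge \dim X$. The difficulty you then perceive, however, rests on a misreading of Theorem~\ref{t:helly}. That theorem's hypothesis is that $X$ has dimension \emph{at most} $d$, so it may be invoked with \emph{any} $d \ge \dim X$; it is not pinned to $d = \dim X$. Since $\dim X \le N$, you are free to apply the theorem with $d = N$: every continuous map $|\N| = \mathbb{S}^N \to \mathbb{S}^N$ is then null-homotopic, and the identity map already contradicts this. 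No reduction to the case $N = d$ is needed, and the paper's proof ends here. (Your own ``Helly-type corollary'' is, incidentally, proved by exactly this flexibility in $d$, so you implicitly use the step you believe you are avoiding.)

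The repair you propose for the ``general case'' does not go through as stated. After the Helly-type corollary you have two constraints on each $|T_i|$, namely $k_i + 1 \le |T_i| \le d+1$, and you want to set new exponents $k_i := |T_i| - 1$ and then assert $\sum_i k_i = d+1$. But $\sum_i(|T_i|-1) \ge \sum_i k_i^{\mathrm{orig}} \ge d+1$ is the only bound in that direction, and there is no matching upper bound: each summand is only individually $\le d$, so the sum can easily exceed $d+1$ once $n \ge 2$. The earlier instruction to ``decrement the $k_i$'s to arrange $\sum_i k_i = d+1$'' is also incompatible with the later reassignment $k_i := |T_i| - 1$ (which \emph{increases} each $k_i$), and decrementing cannot even be carried out when $n > d+1$ since each $k_i$ must remain $\ge 1$. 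In short, the reduction is unnecessary and, as written, does not hold; replacing it by the observation that Theorem~\ref{t:helly} applies with $d = N$ closes the proof and recovers the paper's argument.
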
 

\begin{proof} Suppose that the conclusion
of the proposition were false.
Then for $i=1,\dots,n$ there would be a smallest
integer $k_i'\ge k_i$ such that some
$(k_i'+1)$-element subset 
$T_i=\{s_{i,1},\dots, s_{i,k_i'+1}\}$ in $S_i$  did not have a common fixed point.  

Since any $k_i'$
elements of $T_i$ have a common fixed point, 
the nerve of the family $\F_i=\{\fix(s_{i,1}),\dots,\fix(s_{i,k_i'+1})\}$
would be the boundary of a $k_i'$-simplex $\partial\Delta_{k_i'}$. Hence, by
Proposition \ref{c:join}, the nerve of  $\F_1\cup\dots\cup \F_n$  would be the join  $\partial\Delta_{k_1'}\ast
\dots\ast\partial\Delta_{k_n'}$. But this
contradicts Theorem \ref{t:helly}, because 
the realisation of this join
is homeomorphic to a sphere of dimension $(\sum_{i=1}^n k_i') -1\ge\dim X$.
\end{proof}

Since isometries of finite order have fixed points, 
taking $k_i=1$ we get:

\begin{cor}\label{c:torsion} If each of the
groups $\G_1,\dots,\G_n$ has a finite
generating set consisting of elements of 
finite order,
then at least one of the $\G_i$ has a fixed point whenever $D=\G_1\times\dots\times\G_n$ acts by isometries of a complete \cat space of dimension
less than $n$.
\end{cor}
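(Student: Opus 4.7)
The plan is to apply the Bootstrap Lemma (Proposition \ref{l:bootstrap}) directly with $k_i=1$ for every $i$, using the given torsion generating sets as the subsets $S_i$. Concretely, for each $i$, let $S_i\subset\G_i$ be a finite generating set consisting of torsion elements. Via the product decomposition $D=\G_1\times\dots\times\G_n$, we identify each $\G_i$ with its image in $\isom(X)$; the key structural fact is that elements from different factors commute, so $[s_i,s_j]=1$ whenever $s_i\in S_i$, $s_j\in S_j$ and $i\neq j$. This exactly matches the commutation hypothesis in Proposition \ref{l:bootstrap}.

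Next I would verify the hypothesis that each $1$-element subset $\{s\}\subset S_i$ has a fixed point. Since $s$ has finite order, the cyclic group $\langle s\rangle$ is finite, hence every $\langle s\rangle$-orbit in $X$ is bounded; by the fixed-point corollary for bounded orbits (from Section \ref{s:1}), $\fix(s)$ is non-empty. With $k_1=\dots=k_n=1$ we have $k_1+\dots+k_n=n$, and by hypothesis $\dim X<n$, so Proposition \ref{l:bootstrap} applies.

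The conclusion of the Bootstrap Lemma gives an index $i$ such that every finite subset of $S_i$ has a common fixed point. Because $S_i$ is itself finite, $\fix(S_i)$ is non-empty; and since $\G_i=\langle S_i\rangle$, this common fixed point of $S_i$ is fixed by all of $\G_i$. Hence $\G_i$ fixes a point, which is the desired conclusion.

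There is no real obstacle here: all the work has been done in the Bootstrap Lemma and in the earlier fixed-point results for bounded orbits. The only thing to be careful about is the bookkeeping that $\dim X<n=\sum k_i$ with $k_i=1$, and that a finite generating set of torsion elements satisfies the $1$-element-fixed-point hypothesis automatically.
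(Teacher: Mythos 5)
Your proof is correct and follows exactly the approach the paper intends: apply the Bootstrap Lemma with $k_1=\dots=k_n=1$, using the fact that a finite-order isometry has bounded orbits and hence a fixed point to verify the singleton hypothesis. The paper compresses this into the one-line remark ``Since isometries of finite order have fixed points, taking $k_i=1$ we get:'' preceding the corollary, and your write-up simply spells out the same bookkeeping.
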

 
When applying the above proposition one has to
overcome the fact that the conclusion only applies
to {\em{some}} $S_i$.
 A convenient way of gaining more control 
 is to restrict attention to conjugate sets.

\begin{cor}[Conjugate Bootstrap]\label{c:conjug}
Let $k$ and $n$ be positive
integers and let $X$ be a complete \cat space of dimension less than $nk$.
Let $S_1,\dots,S_n$ be conjugates of a subset  
$S\subset\isom(X)$ with $[s_i,s_j]=1$
for all $s_i\in S_i$ and $s_j\in S_j\ (i\neq j)$.

If each $k$-element subset of $S$ has a 
fixed point in $X$, then so does each finite subset
of $S$.
\end{cor}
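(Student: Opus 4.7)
The plan is to deduce this directly from the Bootstrap Lemma (Proposition \ref{l:bootstrap}) by choosing $k_1 = k_2 = \cdots = k_n = k$, so that the dimension hypothesis $\dim X < nk = k_1 + \cdots + k_n$ matches the one in that proposition. The commutation assumption on the $S_i$ is already part of the hypothesis, so the only thing left to verify is that each $k$-element subset of each $S_i$ has a common fixed point in $X$.

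For this I would exploit the elementary fact that conjugation by an element $g \in \isom(X)$ carries $\fix(h)$ bijectively onto $\fix(ghg^{-1})$, and hence $\fix(T)$ onto $\fix(gTg^{-1})$ for any subset $T \subseteq \isom(X)$. Writing $S_i = g_i S g_i^{-1}$ for some $g_i \in \isom(X)$, every $k$-element subset of $S_i$ has the form $g_i T g_i^{-1}$ for some $k$-element subset $T\subset S$; by hypothesis such a $T$ has a common fixed point, and applying $g_i$ to it yields a common fixed point for $g_i T g_i^{-1}$. This verifies the hypothesis of Proposition \ref{l:bootstrap}, which then supplies an index $i$ such that every finite subset of $S_i$ has a common fixed point. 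To push this conclusion back to $S$, one runs the conjugation argument in reverse: given a finite $T \subset S$, the set $g_i T g_i^{-1}$ is a finite subset of $S_i$ and hence has a fixed point $x$, so $g_i^{-1}.x$ is a common fixed point for $T$.

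There is no real obstacle here beyond this bookkeeping: the key work has been done in establishing Proposition \ref{l:bootstrap}, and the only subtlety is that the conjugating elements $g_i$ are taken in $\isom(X)$ (not in any distinguished subgroup), which is exactly what is needed for conjugation to intertwine fixed-point sets inside $X$.
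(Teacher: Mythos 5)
The paper states the Conjugate Bootstrap as an immediate corollary of the Bootstrap Lemma (Proposition~\ref{l:bootstrap}) without writing out a proof; your argument supplies exactly the routine details one would expect to fill in. Setting $k_1=\cdots=k_n=k$, using that conjugation in $\isom(X)$ transports fixed-point sets (so each $k$-element subset of each $S_i$ inherits a fixed point from its preimage in $S$), invoking the Bootstrap Lemma, and then conjugating the resulting conclusion on some $S_i$ back to $S$ is correct and is the intended reading.
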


\subsection{Some surface topology}

The reader will  recall that, given a 
closed orientable surface $\Si$ and two compact
homeomorphic
subsurfaces  $T,T'\subset \Si$,
there exists an automorphism of $\Si$ taking $T$
to $T'$ if and only
$\Si\ssm T$ and
$\Si\ssm T'$ are homeomorphic. In particular,
two homeomorphic subsurfaces are in the
same orbit under the action of ${\rm{Homeo}}(\Si)$ if the
complement of each is connected.

The relevance of this observation to our purposes
here is the following lemma, which will be used in tandem with the Conjugate Bootstrap.

\begin{lemma}\label{l:disjoint}  Let $H$ be the 
subgroup of $\Mod(\S)$ generated by the Dehn
twists in a set of loops  
 all of which are
contained in a compact subsurface $T\subset\Si$
with connected complement.
If $\Si$ contains $m$ mutually disjoint subsurfaces
$T_i$ homeomorphic to $T$, each with connected
complement, then $\Mod(\S)$ contains $m$
mutually-commuting conjugates $H_i$ of $H$. 
\end{lemma}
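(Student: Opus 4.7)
The plan is to construct the conjugates explicitly using the change-of-coordinates observation preceding the lemma, and then to verify commutativity using the fact that mapping classes supported on disjoint subsurfaces commute.

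First I would invoke the topological observation recalled just above the lemma: since each $T_i\subset\Si$ is homeomorphic to $T$ and both $\Si\smallsetminus T$ and $\Si\smallsetminus T_i$ are connected, there is a homeomorphism $\phi_i\in{\rm{Homeo}}(\Si)$ with $\phi_i(T)=T_i$. Let $f_i\in\Mod(\Si)$ denote its mapping class, and define $H_i:=f_i H f_i^{-1}$. By construction $H_1,\dots,H_m$ are conjugates of $H$.

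Next I would identify $H_i$ concretely. If $H$ is generated by Dehn twists $\{T_{\alpha}\}$ in loops $\alpha\subset T$, then $f_i T_{\alpha} f_i^{-1}=T_{\phi_i(\alpha)}$, where $\phi_i(\alpha)\subset T_i$. Hence $H_i$ is generated by Dehn twists in loops that all lie inside the subsurface $T_i$.

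Finally I would verify the commutativity. For $i\ne j$, every generator of $H_i$ is a Dehn twist in a loop contained in $T_i$, and every generator of $H_j$ is a Dehn twist in a loop in $T_j$; since $T_i\cap T_j=\emptyset$, each such pair of loops is disjoint, so the corresponding Dehn twists commute in $\Mod(\Si)$. Commutation of generating sets yields $[H_i,H_j]=1$, as required.

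There is no serious obstacle here: the argument is a direct application of the change-of-coordinates principle together with the standard fact that Dehn twists along disjoint simple closed curves commute. The only point requiring a moment's care is noting that conjugation by $f_i$ sends the chosen generating set of $H$ to a generating set of $H_i$ consisting of Dehn twists supported in $T_i$, which is immediate from the formula $f T_{\alpha} f^{-1}=T_{f(\alpha)}$.
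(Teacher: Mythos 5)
Your proof is correct and follows exactly the paper's argument: use the change-of-coordinates observation to get homeomorphisms $\phi_i$ carrying $T$ to $T_i$, define $H_i=\phi_i H\phi_i^{-1}$, and observe that the conjugates are supported on disjoint subsurfaces and hence commute. You simply spell out the last step via the formula $fT_\alpha f^{-1}=T_{f(\alpha)}$ and commutativity of Dehn twists along disjoint curves, which the paper leaves implicit.
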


\begin{proof} Let $\phi_{i}$ be the mapping class of a 
homeomorphism $\Si\to\Si$ carrying
$T$ to $T_i$. The subgroups
$H_i:=\phi_i H\phi_i^{-1}$
 are supported in disjoint subsurfaces and therefore
commute.
\end{proof}

\section{The Lickorish generators}\label{s:lick}

Let $X$ be a complete \cat space of dimension less than $g$ and suppose
that $\modsg$ acts by isometries on $X$. We shall prove in Lemma 5.3  that if 
the Dehn twist  in each simple closed curve $C\subset\Sigma$ fixes a point of $X$,  
then the Dehn twists in any pair of curves with $|C\cap C'|=1$ have a common
fixed point. This provides the base step in an inductive argument that we use to prove that
increasingly large finite sets of Dehn twists have common fixed points. We achieve the inductive step
by applying the Conjugate Bootstrap Criterion (Corollary \ref{c:conjug}); this requires us to find
a certain number of commuting conjugates of whatever set of twists we are considering. We find
these commuting conjugates by applying Lemma \ref{l:disjoint}.

 For this strategy to work, we need a  set of Dehn twists that generates $\modsg$ and is such that one can obtain good estimates
 on the complexity of the minimal subsurface on which any subset of size $n$ is supported, as $n$ grows. The purpose 
of the present (rather technical) section is to obtain such estimates for the {\em{Lickorish generators}}.  (Working with
the smaller set of Humphries  generators  \cite{humph} would  reduce the number of cases that we have to  consider in 
our analysis, but the corresponding loss of symmetry would obscure some aspects of the argument.)

We fix a closed orientable surface $\S_g$ of genus $g\ge 2$.
Whenever we speak of a {\em{subsurface}} it is
to be understood that the subsurface is
compact and connected.

Building on classical work of Max Dehn \cite{dehn},
Raymond Lickorish \cite{lick}
proved that the mapping class
group of $\S_g$
is generated by the Dehn twists in $3g-1$
curves $\a_i,\b_i\, (1\le i\le g),\, \g_i\, (1\le i < g)$ portrayed in figure 1.
These are pairwise disjoint
except that  $|\a_i\cap\b_i| = |\b_i\cap\g_i|=|\b_i\cap\g_{i-1}| =1$.
Let $\L$ denote this set of curves.

We say that a subset
$S\subset \L$ is {\em{connected}} if the union 
$U(S)$ of the
loops in $S$ is connected. 

Let $A_{ij}=
\{\alpha_k \mid i\le k\le j\}$, let $B_{ij}=
\{\b_k \mid  i\le k\le j\}$, and let $C_{ij}=
\{\g_k \mid  i\le k\le j\}$. We
 distinguish the following connected subsets of
$\L$. 
\begin{equation}\label{intervals}
\begin{aligned}\\
\llbracket \b_i,\b_j\rrbracket &:=A_{i+1,j-1}\cup B_{ij}\cup C_{i,j-1}\\ 
\llbracket \b_i,\a_j\rrbracket  &:= \llbracket \b_i,\b_j\rrbracket \cup\{\a_j\}\  \
&\llbracket \a_i,\b_j\rrbracket  := \{\a_i\}\cup \llbracket \b_i,\b_j\rrbracket \\
\llbracket \b_i,\g_j\rrbracket &:=\llbracket \b_i,\b_j\rrbracket \cup \{\g_j\}\ \
 &\llbracket \g_i,\b_j\rrbracket  := \llbracket \b_i,\b_j\rrbracket \ssm\{\b_i\}\\
\llbracket \a_i,\a_j\rrbracket  &:=\{\a_i\}\cup \llbracket \b_i,\a_j\rrbracket \ \
&\llbracket \g_i,\g_j\rrbracket :=\llbracket \b_i,\g_j\rrbracket \ssm\{\b_i\}\\
\llbracket \g_i,\a_j\rrbracket &:=\llbracket \g_i,\b_j\rrbracket \cup\{\a_j\}\ \
&\! \llbracket \a_i,\g_j\rrbracket  :=\llbracket \a_i,\b_j\rrbracket \cup\{\g_j\}.
\end{aligned}
\end{equation}
 Roughly speaking, these are the
sets of loops in $\L$  that one encounters as one proceeds 
clockwise  in figure 1 from a loop with lesser index
$i$ to one of greater index $j$.

\subsection{Good neighbourhoods of subsets of
$\L$}

In order to describe ``good neighbourhoods" we need
 the following notation. First,
for $i<j$ we define
 $\delta_{ij}$ to be a loop that
  cobounds with $\alpha_i$
a subsurface $T(i,j)$
of genus $j-i$ with two boundary components that contains all of the loops $\llbracket \g_i,\b_j\rrbracket $; in the language of figure 1, it consists of one half of handle $i$ and all of the handles with index
 $i+1$ to $j$ inclusive.
The definition of $\delta_{ij}$ for $j<i$ is similar except that  the subsurface $T(i,j)$ 
that it cobounds with
$\alpha_i$ now consists of one half of handle $i$ and all of the handles with index from $j$ to
$i-1$ inclusive.

$\delta_{13}$ is shown in figure 1 and $\delta_{31}$
is obtained from $\delta_{13}$ by rotating the surface through
an angle $\pi$ about the axis of symmetry that
lies in the plane of the paper and intersects $\alpha_2$.

Finally, we need a loop $c_{ij}$ that separates
the handles  $i$ to $j$ from the
remainder of the surface and a loop $d_{ij}$
that, together with $\alpha_i$ and $\alpha_j$,
separates handles $(i+1)$ to $(j-1)$ from
the remainder of the surface; $c_{13}$ and $d_{13}$ are shown
in figure 1.

\begin{figure}
$$
\setlength{\unitlength}{0.05in}
\begin{picture}(0,0)(0,11)
\put(15.5,45.5){ {\bf {\footnotesize $\alpha_1$} } }
\put(9,38.5){ {\bf {\footnotesize $\beta_1$} } }
\put(22,29.5){ {\bf {\footnotesize $\gamma_1$} } }
\put(32.5,38.5){ {\bf {\footnotesize $\alpha_2$} } }
\put(22.5,38){ {\bf {\footnotesize $\beta_2$} } }
\put(24,25.5){ {\bf {\footnotesize $d_{13}$} } }
\put(39.5,21.5){ {\bf {\footnotesize $\alpha_3$} } }
\put(32.5,27.7){ {\bf {\footnotesize $\beta_3$} } }
\put(25.5,20){ {\bf {\footnotesize $\delta_{13}$} } }
\put(15,20){ {\bf {\footnotesize $c_{13}$} } }
\put(12.5,39){\line(1,0){2.4}}
\end{picture}
\includegraphics[width=2in]{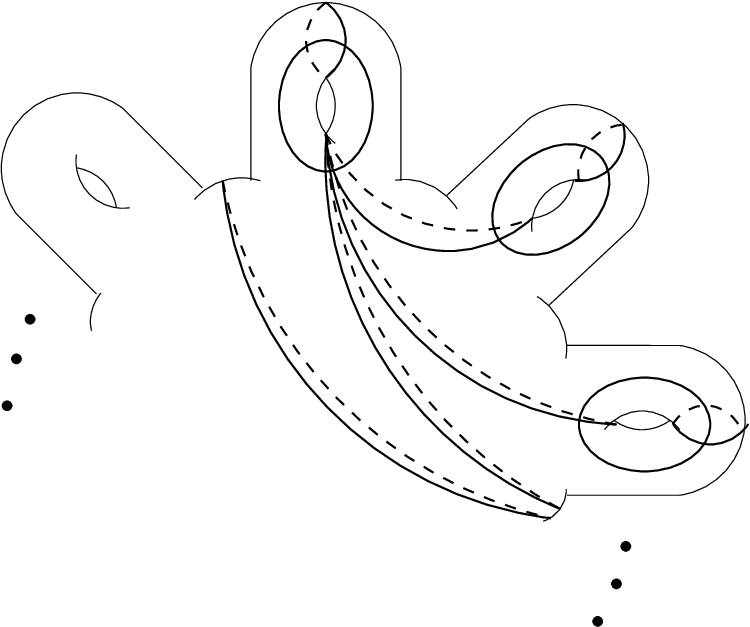}
$$
\caption{The loops used in the main argument}
\end{figure}
 
 Each of the following six lemmas is an
elementary observation, but
 their cumulative import is non-trivial.

\begin{lemma}\label{l:aa}
 The union of the loops in $\llbracket \alpha_i,\alpha_j\rrbracket \subset \L$ 
is contained in a  subsurface of genus $j-i+1$
with boundary  $c_{ij}$.
\end{lemma}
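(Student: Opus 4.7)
The plan is to prove this lemma essentially by unwinding definitions and then appealing to Figure 1. My first step would be to use the definitions in \eqref{intervals} to enumerate explicitly the loops that comprise $[\alpha_i,\alpha_j]$. Chaining the definitions gives
\[
[\alpha_i,\alpha_j] = \{\alpha_i\}\cup[\beta_i,\alpha_j] = \{\alpha_i\}\cup[\beta_i,\beta_j]\cup\{\alpha_j\} = A_{ij}\cup B_{ij}\cup C_{i,j-1},
\]
so this set consists precisely of $\alpha_k$ for $i\le k\le j$, $\beta_k$ for $i\le k\le j$, and $\gamma_k$ for $i\le k\le j-1$.

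Next I would observe that, reading off Figure 1, each loop $\alpha_k,\beta_k$ is supported on handle $k$, and each $\gamma_k$ is supported on the union of handles $k$ and $k+1$ (where it meets $\beta_k$ and $\beta_{k+1}$ in one point each). Hence every loop in $A_{ij}\cup B_{ij}\cup C_{i,j-1}$ is contained in the union of handles $i,\, i+1,\dots,\, j$ of $\Sigma_g$.

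Finally, I would invoke the defining property of $c_{ij}$: by construction $c_{ij}$ is a separating loop that bounds, on one side, a subsurface $\Sigma'$ consisting of exactly the handles numbered $i$ through $j$ together with the tube in which they sit. This $\Sigma'$ has genus $j-i+1$ and its single boundary component is $c_{ij}$. Combined with the previous paragraph, $U([\alpha_i,\alpha_j])\subseteq \Sigma'$, which is the required conclusion.

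The only mild obstacle is making the appeal to Figure 1 rigorous, but since the Lickorish loops are standard and $c_{ij}$ was introduced for precisely this purpose, no serious difficulty arises; the lemma is genuinely a bookkeeping statement, and the real work lies in the subsequent lemmas which exploit it.
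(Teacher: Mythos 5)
Your proposal is correct, and it supplies precisely what the paper leaves implicit: the paper gives no proof for this lemma, declaring it (along with Lemmas~\ref{l:gg}--\ref{l:badChains}) to be an ``elementary observation,'' and your unwinding of the definitions in (\ref{intervals}) to get $[\alpha_i,\alpha_j]=A_{ij}\cup B_{ij}\cup C_{i,j-1}$, followed by the observation that these loops live in handles $i$ through $j$ and the appeal to the defining property of $c_{ij}$, is exactly the bookkeeping intended.
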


\begin{lemma}\label{l:gg} The union of the loops in
$\llbracket \gamma_i,\gamma_j\rrbracket \subset \L$
is contained in a subsurface of genus $j-i$
with boundary $\alpha_i\cup d_{i,j+1}\cup \a_{j+1}$.
\end{lemma}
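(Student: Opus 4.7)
The plan is to unwind the definitions in (\ref{intervals}) to get an explicit list of the loops making up $[\gamma_i,\gamma_j]$, and then to check by inspection of Figure 1 that each such loop can be drawn in the subsurface cut off by $\alpha_i\cup d_{i,j+1}\cup\alpha_{j+1}$.

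First, working from the inside out in the recursive definitions, I compute
\[
[\gamma_i,\gamma_j] \;=\; \big([\beta_i,\beta_j]\cup\{\gamma_j\}\big)\ssm\{\beta_i\}
\;=\; A_{i+1,j-1}\cup B_{i+1,j}\cup C_{i,j}.
\]
Thus the loops in $[\gamma_i,\gamma_j]$ are exactly the curves $\alpha_{i+1},\dots,\alpha_{j-1}$, the curves $\beta_{i+1},\dots,\beta_j$, and the curves $\gamma_i,\gamma_{i+1},\dots,\gamma_j$.

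Next I identify the claimed subsurface. By the definition of $d_{k\ell}$, the loops $\alpha_k$, $\alpha_\ell$ and $d_{k\ell}$ cobound a subsurface of $\Sigma_g$ which contains handles $k+1,\dots,\ell-1$. Specialising to $k=i$ and $\ell=j+1$, the loops $\alpha_i, \alpha_{j+1}, d_{i,j+1}$ bound a subsurface $T$ containing handles $i+1,\dots,j$; so $T$ has $j-i$ handles and three boundary components, and therefore $T$ has genus $j-i$, as required.

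Finally, I verify that every curve of $[\gamma_i,\gamma_j]$ lies in $T$. The curves $\alpha_k$ with $i+1\le k\le j-1$ and $\beta_k$ with $i+1\le k\le j$ lie in handles whose indices are strictly between $i$ and $j+1$, hence in $T$. For $i+1\le k\le j-1$, the curve $\gamma_k$ lies in the connecting region between handles $k$ and $k+1$, both of which are interior handles of $T$. The only curves requiring any further thought are $\gamma_i$ and $\gamma_j$: these meet handles $i$ and $j+1$ respectively, but only in the portions of those handles lying on the $T$-side of the $\alpha_i$- and $\alpha_{j+1}$-boundary components. A glance at Figure 1 confirms that both $\gamma_i$ and $\gamma_j$ can be isotoped to lie in $T$. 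No step is genuinely hard here; the only potential obstacle is clerical, namely making sure the endpoint conventions in (\ref{intervals}) line up correctly with the handles that $T$ contains, which the computation above confirms.
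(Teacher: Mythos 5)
The paper gives no explicit proof of Lemma~\ref{l:gg}: it is one of six statements presented as ``elementary observations'' to be read off from Figure~1 and the definitions in~(\ref{intervals}). Your proposal does exactly what the paper leaves implicit — unwinding $[\gamma_i,\gamma_j]=A_{i+1,j-1}\cup B_{i+1,j}\cup C_{i,j}$ from the recursive definitions, identifying the subsurface cut out by $\alpha_i\cup d_{i,j+1}\cup\alpha_{j+1}$ from the definition of $d_{k\ell}$, and checking curve by curve that the list fits inside it (noting that $\gamma_i$ and $\gamma_j$ are disjoint from the boundary curves $\alpha_i$ and $\alpha_{j+1}$ and so lie entirely on the correct side). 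This is correct and is in substance the same argument the paper intends.
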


\begin{lemma}\label{l:ag} The union of the loops in
$\llbracket \alpha_i,\gamma_j\rrbracket \subset \L$
is contained in the subsurface $T(j,i)$,
which has genus $(j-i+1)$ and boundary 
$\alpha_j\cup \delta_{ji}$. 
\end{lemma}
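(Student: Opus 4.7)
The plan is to verify the lemma by direct picture-check in Figure 1. First, expanding the definitions in \eqref{intervals} gives
$$
[\alpha_i,\gamma_j] = \{\alpha_i, \alpha_{i+1}, \ldots, \alpha_{j-1}\} \cup \{\beta_i, \ldots, \beta_j\} \cup \{\gamma_i, \ldots, \gamma_j\}.
$$
Each longitude $\alpha_k$ (for $i\le k\le j-1$) and meridian $\beta_k$ (for $i\le k\le j$) is supported in the single handle of index $k$, and each connector $\gamma_k$ (for $i\le k\le j$) is supported in the pair of handles indexed $k$ and $k+1$. The natural candidate for $T(j,i)$ is therefore the regular neighbourhood of the union of these supports: a chain consisting of handles $i, i+1, \ldots, j$ with a small extension into handle $j+1$ to accommodate $\gamma_j$.

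The second step is to identify this neighbourhood with $T(j,i)$ as described. By the rotational $\pi$-symmetry of Figure 1 that was used earlier to produce $\delta_{ji}$ from $\delta_{ij}$, one sees that such a chain-of-handles neighbourhood is precisely the subsurface cobounded by $\delta_{ji}$ and $\alpha_j$, so the boundary is $\alpha_j\cup\delta_{ji}$ as claimed. Counting the handles contained in full, namely $i, i+1, \ldots, j$, gives genus $j-i+1$; the thin collar reaching into handle $j+1$ contributes no additional genus.

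No substantial obstacle is expected: the lemma is grouped with five other ``elementary observations'' whose content is purely diagrammatic. The one interpretive point I would flag for the careful reader is that the curves $\alpha_j$ and $\delta_{ji}$ appearing on the boundary of $T(j,i)$ should be read as suitably chosen isotopy representatives---in particular, the boundary copy of $\alpha_j$ is taken as a parallel push-off of the standard longitude so that the meridian $\beta_j$ sits in the interior of $T(j,i)$, and $\delta_{ji}$ is pushed slightly beyond handle $j$ toward handle $j+1$ so as to enclose $\gamma_j$. With these natural conventions in place, the inclusion and the counts are immediate from Figure 1.
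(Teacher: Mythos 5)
Your expansion of $[\alpha_i,\gamma_j]$ is correct, and so is the first half of the geometric analysis: the loops in this set occupy handles $i,\dots,j$ fully, plus a small piece of handle $j+1$ needed for $\gamma_j$, and a neighbourhood of that region is a subsurface of genus $j-i+1$ with two boundary components. Where the argument breaks down is the attempt to identify this region as $T(j,i)$ with boundary $\alpha_j\cup\delta_{ji}$ via ``suitably chosen isotopy representatives.'' That patch cannot work: $\beta_j$ has geometric (and algebraic) intersection number $1$ with $\alpha_j$, so \emph{every} curve isotopic to $\alpha_j$ meets $\beta_j$, and no embedded subsurface having such a curve as a boundary component can contain $\beta_j$. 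Since $\beta_j\in B_{ij}\subset[\alpha_i,\gamma_j]$, the stated containment $[\alpha_i,\gamma_j]\subset T(j,i)$ is obstructed by intersection theory, not merely by an unfortunate choice of representatives; pushing $\alpha_j$ off does not help.

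The discrepancy you ran into is an index shift in the lemma as printed, and the $\pi$-rotation symmetry you invoke actually diagnoses it. Applying that rotation to Lemma \ref{l:ga}, which asserts $[\gamma_i,\alpha_j]\subset T(i,j)$, sends $\gamma_i\mapsto\gamma_{g-i}$ and $\alpha_j\mapsto\alpha_{g+1-j}$; writing $i'=g+1-j$ and $j'=g-i$ one gets $[\alpha_{i'},\gamma_{j'}]\subset T(j'+1,i')$. In other words the symmetric statement is $[\alpha_i,\gamma_j]\subset T(j+1,i)$, a subsurface of genus $(j+1)-i=j-i+1$ with boundary $\alpha_{j+1}\cup\delta_{j+1,i}$ --- precisely the region you constructed, and precisely the genus that is used when Lemma \ref{l:ag} is invoked in the proof of Proposition \ref{p:size}. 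So the correct move is not to isotope $\alpha_j$ but to observe that the lemma's ``$T(j,i)$'' and ``$\alpha_j\cup\delta_{ji}$'' should read ``$T(j+1,i)$'' and ``$\alpha_{j+1}\cup\delta_{j+1,i}$.'' Your construction proves that corrected statement; the written justification for matching it to the stated boundary is the step that fails.
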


\begin{lemma}\label{l:ga} The union of the loops in
$\llbracket \gamma_i,\alpha_j\rrbracket \subset \L$
is contained in the subsurface $T(i,j)$,
which has genus $(j-i+1)$ and  boundary 
$\alpha_i\cup\delta_{ij}$. 
\end{lemma}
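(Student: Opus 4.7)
The plan is a direct combinatorial verification: I would unwind what the abbreviation $[\g_i,\a_j]$ stands for, then trace each resulting loop in figure 1 and check that it sits inside the subsurface $T(i,j)$ already described in the paragraph preceding the lemma.

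I would begin by expanding the definitions in display (\ref{intervals}):
\[
[\g_i,\a_j] \;=\; [\g_i,\b_j]\cup\{\a_j\} \;=\; \bigl([\b_i,\b_j]\ssm\{\b_i\}\bigr)\cup\{\a_j\} \;=\; A_{i+1,j}\cup B_{i+1,j}\cup C_{i,j-1}.
\]
Thus the loops to be located are exactly $\a_{i+1},\ldots,\a_j$, together with $\b_{i+1},\ldots,\b_j$ and $\g_i,\ldots,\g_{j-1}$.

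Next I would place each of these curves in the handle decomposition of $\S_g$. By the standard picture, $\a_k$ and $\b_k$ lie inside handle $k$, while $\g_k$ lies in the short neck connecting handles $k$ and $k+1$. By the description preceding the lemma, $T(i,j)$ is a subsurface with boundary $\a_i\cup\d_{ij}$ consisting of all of handles $i+1$ through $j$ together with one half of handle $i$, namely the half adjacent to handle $i+1$. Consequently, every $\a_k,\b_k$ with $i+1\le k\le j$ lies in $T(i,j)$, as does every $\g_k$ with $i+1\le k\le j-1$, since each such neck is sandwiched between two handles already inside $T(i,j)$. The one loop deserving attention is $\g_i$, which sits in the neck between handles $i$ and $i+1$; but that neck is contained in the union of the chosen half of handle $i$ and of handle $i+1$, both of which belong to $T(i,j)$. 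Hence $\g_i\subset T(i,j)$ as well, and the containment $[\g_i,\a_j]\subset T(i,j)$ is established.

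Finally, the boundary and genus data asserted in the lemma are simply read off the construction of $T(i,j)$ and $\d_{ij}$: the two boundary components are $\a_i$ and $\d_{ij}$ by the definition of $\d_{ij}$, and the genus is tallied from the contribution of the full handles included in $T(i,j)$. I do not expect any genuine obstacle here; the work is essentially bookkeeping. The only mild pitfall is orienting oneself correctly in figure 1 so that the ``half of handle $i$'' one selects is the half adjacent to handle $i+1$, ensuring that $\g_i$ ends up on the $T(i,j)$ side of $\d_{ij}$. An entirely parallel unpacking will then dispatch the companion lemmas \ref{l:aa}, \ref{l:gg} and \ref{l:ag}.
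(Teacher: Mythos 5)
Your approach is the right one, and indeed it is the only one available: the paper does not prove any of Lemmas~\ref{l:aa}--\ref{l:badChains}, flagging them instead as ``elementary observations,'' so the intended argument is exactly the kind of definition-unwinding and curve-tracing you describe. Your expansion $[\g_i,\a_j]=A_{i+1,j}\cup B_{i+1,j}\cup C_{i,j-1}$ is correct, your placement of each $\a_k,\b_k,\g_k$ in a handle or neck is correct, and your treatment of the one delicate curve $\g_i$ (checking it lands on the $T(i,j)$ side of $\a_i$) is the right thing to worry about and is settled by the definition, since $T(i,j)$ was defined to contain $[\g_i,\b_j]\ni\g_i$.

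One thing you should have caught, however: if you actually carry out the ``tally the genus from the full handles'' step you gesture at, you get genus $j-i$ (one full handle for each of $k=i+1,\dots,j$, and ``half of handle $i$'' contributes nothing to genus), not the $j-i+1$ asserted in the lemma. This is not an error in your reasoning but a numerical slip in the paper: the paragraph defining $\d_{ij}$ and $T(i,j)$ already says $T(i,j)$ has genus $j-i$, and the proof of Proposition~\ref{p:size} uses genus $j-i$ for $[\g_i,\a_j]$ in the crucial even case ($I=[\g_i,\b_j]$ enclosed in $[\g_i,\a_j]$ with $j\le i+k$, giving genus at most $k$ -- this fails if the genus were $j-i+1=k+1$). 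A quick Euler-characteristic check confirms this: $\a_i\cup\d_{ij}$ separates $\S_g$ into $T(i,j)$ and a complementary piece, both with two boundary circles, whose genera must sum to $g-1$; since the complement already carries $g-j+i-1$ full handles, $T(i,j)$ has genus exactly $j-i$. A blind ``read off the construction'' would and should have produced $j-i$, and the mismatch with the stated $j-i+1$ deserved an explicit remark rather than silent acceptance. The same typo appears in Lemma~\ref{l:ag}; Lemmas~\ref{l:aa} and~\ref{l:gg}, by contrast, state genera consistent with the constructions of $c_{ij}$ and $d_{ij}$.
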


An {\em{$m$-chain}} in a surface $\Si_g$ is a finite sequence of simple closed
loops   $C_1,\dots, C_m$ such that 
$|C_i\cap C_{i+1}|=1$ and $C_i\cap C_j=
\emptyset$ if $|i-j|>1$. 

\begin{lemma}\label{l:goodChains}
 If
$m$ is even, then a closed
regular neighbourhood of an $m$-chain is a
surface of genus $m/2$ with one boundary
component. If $m$ is odd, then such a neighbourhood
is a  surface of
genus $(m-1)/2$ with two boundary
components.
\end{lemma}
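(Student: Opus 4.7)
The plan is to reduce the statement to a Euler-characteristic count together with an induction on $m$ that tracks the number of boundary components. The $1$-complex $\Gamma = C_1\cup\dots\cup C_m$ has $m-1$ vertices (the intersection points $p_i = C_i\cap C_{i+1}$) and $2m-2$ edges (each of $C_1$ and $C_m$ contributes a single loop-edge, and each intermediate $C_i$ is cut by its two intersection points into two arcs), so $\chi(\Gamma) = 1-m$. A closed regular neighbourhood $N$ of $\Gamma$ in $\Si_g$ deformation retracts onto $\Gamma$, and is a compact orientable surface with non-empty boundary since $\Si_g$ is orientable. Writing $N\cong\Si_{g_m,b_m}$, the identity $\chi = 2-2g-b$ gives the single linear relation $2g_m+b_m = m+1$, so it suffices to compute $b_m$.

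To pin down $b_m$ I would induct on $m$, carrying the strengthened assertion that when $m$ is odd the two sides of $C_m$ (in the local orientation induced from $\Si_g$) lie on distinct components of $\partial N$, while when $m$ is even they lie on a common component. The base cases $m=1$ (annulus, $b=2$) and $m=2$ (once-punctured torus, $b=1$) are immediate. For the inductive step write $N_m = N_{m-1}\cup A_m$, where $A_m$ is an annular neighbourhood of $C_m$ meeting $N_{m-1}$ in a small disk $D$ around $p_{m-1,m}$. Locally at $p_{m-1,m}$ the union is a ``plus sign'' whose boundary is obtained by splicing four short arcs at the four corners of $D$; the global effect on $\partial N_{m-1}\sqcup\partial A_m$ depends on whether the two sides of the $C_{m-1}$-ribbon near $p_{m-1,m}$ lie on the same or on different components of $\partial N_{m-1}$.

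If $m-1$ is odd, the strengthened hypothesis places the two sides of $C_{m-1}$ on distinct components $U, U'$ of $\partial N_{m-1}$, and the plumbing merges $U$, $U'$ and the two boundary circles of $A_m$ into a single circle, so $b_m = b_{m-1}-1 = 1$ and both sides of $C_m$ lie on the resulting common component. If $m-1$ is even the two sides of $C_{m-1}$ lie on a common component $U$, and the plumbing cuts $U$ into two arcs which the cyclic order at $p_{m-1,m}$ (inherited from the orientation of $\Si_g$) pairs with the two circles of $\partial A_m$ so as to produce exactly two boundary circles, with the two sides of $C_m$ distributed one per circle. In both cases the strengthened hypothesis propagates to $N_m$. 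Combined with $2g_m+b_m = m+1$, this yields $(g_m, b_m) = (m/2, 1)$ for even $m$ and $((m-1)/2, 2)$ for odd $m$, as claimed.

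The main obstacle is the even-to-odd transition, where one must verify that the plumbing at $p_{m-1,m}$ splits $U$ into two boundary circles rather than one. This comes down to a local check of the ribbon-graph boundary pairing at the crossing, determined by the cyclic order of edges induced by the orientation of $\Si_g$; the ``wrong'' pairing would produce a single boundary circle and is only permitted in the non-orientable setting. The simultaneous book-keeping of which side of $C_m$ sits on which boundary component is what propagates the inductive hypothesis cleanly and rules out the wrong pairing.
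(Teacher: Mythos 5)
The paper does not prove Lemma~\ref{l:goodChains}: it is listed among the ``six lemmas'' that are called elementary observations and left without argument. So there is no ``paper's own proof'' to compare against; the question is just whether your argument is sound, and it is.

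Your approach --- compute $\chi(N)=\chi(\Gamma)=1-m$ from the obvious cell structure on the chain graph $\Gamma$, deduce $2g+b=m+1$, and then pin down $b$ by induction on $m$ --- is a clean and complete way to establish the lemma. The vertex/edge count ($m-1$ vertices, $2m-2$ edges) is correct, and the base cases $m=1,2$ are right. The inductive plumbing analysis is correct once you resolve the even-to-odd ambiguity, and you do identify the right resolution: the boundary circle of $N_{m-1}$ traverses the ``top'' and ``bottom'' arcs of the $C_{m-1}$-annulus in opposite senses because the boundary orientation is inherited from $\Si_g$, which forces the pairing that splits $U$ into two circles rather than one. (An alternative way to discharge this step, available once you have $2g_m+b_m=m+1$ and have observed $b_m\in\{1,2\}$ from the plumbing, is a parity argument: the ``wrong'' pairing would give $b_m=1$ with $m$ odd, hence non-integer genus.) The auxiliary bookkeeping of which side of $C_m$ sits on which boundary circle propagates correctly through both transitions. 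In short: correct, a bit more explicit than the author intended to be, and a perfectly good replacement for the omitted elementary argument.
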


In the odd case one distinguishes 
between {\em{non-separating}} and {\em{separating}} chains according to whether or
not the complement in $\Si_g$ of the union of the
loops in the chain is connected or not.

\begin{lemma}\label{l:badChains}
The only separating $m$-chains
in $\L$ are those that are obtained from
$\llbracket \a_i,\a_{j}\rrbracket $ by deleting the loops $\a_k$ with $i<k<j$,
where $j=i+\frac 1 2(m-3)$.
\end{lemma}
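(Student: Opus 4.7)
The plan is to reduce the lemma to a case analysis on simple paths in the intersection graph $\Gamma$ of $\L$. This graph is a caterpillar: its spine is $\beta_1\!-\!\gamma_1\!-\!\beta_2\!-\!\gamma_2\!-\!\cdots\!-\!\beta_g$, and each $\alpha_i$ is a leaf attached to the spine vertex $\beta_i$. The disjointness clause in the definition of an $m$-chain forces it to be a simple path of length $m$ in $\Gamma$. Grouping odd-length simple paths by endpoint type, they fall into four families:
\begin{itemize}
\item[\textup{(I)}] $\beta_i,\gamma_i,\ldots,\beta_j$ (both endpoints $\beta$, length $2(j-i)+1$);
\item[\textup{(II)}] $\gamma_i,\beta_{i+1},\ldots,\gamma_j$ (both endpoints $\gamma$, length $2(j-i)+1$);
\item[\textup{(III)}] $\alpha_i,\beta_i,\gamma_i,\ldots,\gamma_j$ (one $\alpha$ endpoint, one $\gamma$ endpoint, length $2(j-i)+3$);
\item[\textup{(IV)}] $\alpha_i,\beta_i,\gamma_i,\ldots,\beta_j,\alpha_j$ (both endpoints $\alpha$, $i<j$, length $2(j-i)+3$).
\end{itemize}
Type~\textup{(IV)} is precisely the family described in the statement of the lemma.

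For type~\textup{(IV)}, I would work inside the subsurface $S$ obtained from $\Sigma_g$ by taking handles $i$ through $j$: a genus-$(j-i+1)$ surface with one boundary circle $c_{ij}$. The chain lies in $S$, and by Lemma~\ref{l:goodChains} its regular neighborhood $N$ has the same genus $j-i+1$ as $S$ but two boundary circles. A direct Euler-characteristic calculation gives $\chi(S\setminus\mathrm{int}\,N)=1$ with three boundary circles, and no connected orientable surface realises these invariants (the genus would be $-1$). Hence $S\setminus\mathrm{int}\,N$ is disconnected, so $\Sigma_g\setminus N$ has at least two components, and the chain is separating.

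For types~\textup{(I)}--\textup{(III)}, I would produce a loop $\delta\in\L$ with $\delta\notin C$ and $|\delta\cap C|=1$, chosen so that $C\cup\{\delta\}$ extends $C$ to an $(m+1)$-chain in $\L$, i.e.~$\delta$ attaches at one of the endpoints of $C$. Concrete choices are $\delta=\alpha_i$ for \textup{(I)}, $\delta=\beta_i$ for \textup{(II)}, and $\delta=\beta_{j+1}$ for \textup{(III)} (which exists since $\gamma_j\in\L$ forces $j\le g-1$); that $|\delta\cap C|=1$ in each case is immediate from the pairwise intersection data of $\L$. Now Lemma~\ref{l:goodChains} gives $\partial N(C)$ two components while $\partial N(C\cup\{\delta\})$ has only one, and $N(C\cup\{\delta\})$ is obtained from $N(C)$ by attaching a one-handle along $\delta\setminus\{p\}$, where $p=\delta\cap C$. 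A one-handle attachment to an orientable surface-with-boundary reduces the boundary count only when its two feet lie on different boundary components. Hence the two ends of $\delta\setminus\{p\}$ approach different components of $\partial N(C)$, so $\delta\setminus\{p\}$ is an arc in $\Sigma_g\setminus C$ connecting those two components. It follows that $\Sigma_g\setminus N(C)$, and therefore $\Sigma_g\setminus C$, is connected, and the chain is non-separating.

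The case enumeration is combinatorial routine, the Euler-characteristic argument in~\textup{(IV)} is elementary, and the topological input for \textup{(I)}--\textup{(III)} is simply the parity statement for one-handle attachments paired with Lemma~\ref{l:goodChains}. I expect the main bookkeeping obstacle to be uniformly verifying $|\delta\cap C|=1$ across all four cases and orientations of the chain; none of the individual steps present a genuine technical difficulty.
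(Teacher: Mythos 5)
The paper lists Lemma~\ref{l:badChains} among ``six elementary observations'' and supplies no proof, so there is no argument in the text to compare against. Your proof is correct and complete, and I see no gaps: the reduction to simple paths in the caterpillar intersection graph is exactly the right normal form (an $m$-chain is an induced path, and in a tree every simple path is induced, so the two notions coincide; and since each $\alpha_k$ is a leaf it can only occur as an endpoint, which is what makes your four families (I)--(IV) exhaust the odd-length paths). The Euler-characteristic count for type~(IV) is clean --- $\chi(S\setminus\mathrm{int}\,N)=1$ with three boundary circles is indeed unrealizable by a connected orientable surface, and since $c_{ij}$ sits in a single component of $S\setminus\mathrm{int}\,N$, gluing back $\Sigma_g\setminus S$ cannot reconnect the pieces. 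For types~(I)--(III) the one-handle parity argument is the standard way to see non-separation, and your three choices of $\delta$ ($\alpha_i$, $\beta_i$, $\beta_{j+1}$ respectively) do extend $C$ to a valid $(m{+}1)$-chain in each case, with the observation $j\le g-1$ taking care of the existence of $\beta_{j+1}$ in case~(III). One small point worth making explicit: the argument shows the attaching arcs of the one-handle meet the two distinct boundary circles of $N(C)$, and since $\Sigma_g\setminus\mathrm{int}\,N(C)$ has at most two components (one per boundary circle), the existence of an arc joining those circles in the complement forces a single component. With that spelled out, the proof stands; it is a reasonable formalization of what the paper takes as ``elementary.''
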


Recall that 
$S\subset \L$ is said to be
 {\em{connected}} if the union 
$U(S)$ of the
loops in $S$ is connected. 

\begin{prop}\label{p:size} Let $S\subset \L$ be a connected
subset.
\begin{enumerate}
\item
If $|S|=2\ell$ is even, then $U(S)$ is either contained
in a  subsurface of genus  $\ell$ with $1$ boundary
component, or else  in a non-separating subsurface
of genus at most $\ell-1$ with $3$ boundary components.

\item
If $|S|=2\ell+1$ is odd, then $U(S)$ is either contained
in a non-separating subsurface of genus $\ell$ with
at most $2$ boundary components, or else is contained
in a non-separating  subsurface of genus  
at most $\ell-1$ that has at most $3$ boundary components.
\end{enumerate}
\end{prop}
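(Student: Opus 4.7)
The approach is to analyze a connected $S\subseteq \L$ via its \emph{spine} $\sigma$, the set of $\b$- and $\g$-loops in $S$. Since the intersection graph of $\L$ is a caterpillar whose backbone is the path $\b_1,\g_1,\b_2,\g_2,\ldots,\b_g$ and each $\a_i$ is pendant at $\b_i$, connectivity of $U(S)$ forces $\sigma$ to be a connected sub-chain of this backbone and forces each $\a_i\in S$ to satisfy $\b_i\in\sigma$ (the degenerate case $S=\{\a_i\}$ is trivial, giving an annular neighborhood). I call $\a_i\in S$ \emph{endpoint} if $\b_i$ lies at an end of $\sigma$ and \emph{internal} otherwise, and I write $a_{\mathrm{int}}$ for the number of internal $\a$'s in $S$ and $k$ for the number of $\b$'s in $\sigma$.

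When $a_{\mathrm{int}}=0$, the loops of $S$ form an $|S|$-chain obtained by appending the (at most two) endpoint $\a$'s to $\sigma$. By Lemma \ref{l:goodChains} the regular neighborhood $N$ has $(g,b)=(|S|/2,1)$ or $((|S|-1)/2,2)$ according to the parity of $|S|$, matching options (1a) and (2a). For the non-separating condition, Lemma \ref{l:badChains} shows that the only separating such chain is $\{\a_i,\b_i,\g_i,\ldots,\b_j,\a_j\}$; for this chain I instead take $T$ from Lemma \ref{l:aa}, which has genus $j-i+1=\ell$ and single boundary $c_{ij}$, so its complement is connected.

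When $a_{\mathrm{int}}\ge 1$, I embed $S$ into the smallest interval from (\ref{intervals}) of the form $[\a_L,\a_R]$, $[\g_L,\g_R]$, $[\a_L,\g_R]$, or $[\g_L,\a_R]$ and take $T$ to be the subsurface from the corresponding one of Lemmas \ref{l:aa}--\ref{l:ga}. The genera supplied by those lemmas are $k$ or $k\pm1$, while each internal $\a$ increases $|S|$ but leaves $\sigma$ unchanged, so $\ell=\lfloor|S|/2\rfloor$ exceeds $g_T$ by at least one whenever $a_{\mathrm{int}}\ge 2$, placing $T$ in the range $g_T\le\ell-1$ required by options (1b) and (2b); for $a_{\mathrm{int}}=1$ one chooses the interval whose subsurface has boundary count matching option (1a) or (2a). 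In the (1a) case where the minimal interval gives $g_T<\ell$, I enlarge the interval on either side (from $[\a_i,\a_j]$ to $[\a_{i-t},\a_{j+s}]$ with $s+t=\ell-k$) to raise the genus to exactly $\ell$; this is possible whenever $\ell\le g$, which is the only regime in which the statement has content.

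The main bookkeeping obstacle is the boundary count in option (b). The regular neighborhood $N$ satisfies $b_N=b_{\mathrm{chain}}+a_{\mathrm{int}}$, where $b_{\mathrm{chain}}\in\{1,2\}$ is the boundary of the chain $\sigma$ plus endpoint $\a$'s; this exceeds the $3$ allowed by option (b) precisely when $b_{\mathrm{chain}}=2$ and $a_{\mathrm{int}}\ge 2$. In those cases I enlarge to the subsurface of Lemma \ref{l:gg} for an appropriate $[\g_{L'},\g_{R'}]$ containing $S$: it has exactly three boundary components $\a_{L'}\cup d_{L',R'+1}\cup\a_{R'+1}$, and by the defining property of $d_{ij}$ these three loops together separate the middle handles $L'+1,\ldots,R'$ from the rest of $\S_g$, so the complement of $T$ is the connected piece containing the other handles. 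The non-separating status of the subsurfaces from Lemmas \ref{l:ag} and \ref{l:ga} follows identically from the defining property of $\delta_{ij}$.
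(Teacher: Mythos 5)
Your decomposition (spine $\sigma$ of $\b$- and $\g$-loops, internal vs.\ endpoint $\a$'s) is the same idea the paper packages as the $m$-chain $C_1,\dots,C_m$ with $m<|S|$, and you invoke the same sequence of Lemmas \ref{l:goodChains}, \ref{l:badChains}, \ref{l:aa}--\ref{l:ga}.  However, the quantitative step at the heart of your non-chain case does not hold: the assertion that ``each internal $\a$ increases $|S|$ but leaves $\sigma$ unchanged, so $\ell$ exceeds $g_T$ by at least one whenever $a_{\mathrm{int}}\ge 2$'' silently assumes that internal $\a$'s also leave the \emph{minimal enclosing interval} unchanged, and that fails when an internal $\a$ sits at the penultimate $\b$ of a spine whose end is a $\g$.

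Concretely, take $S=\{\g_1,\a_2,\b_2,\g_2,\a_3,\b_3,\g_3\}$, so $|S|=7$, $\ell=3$, $\sigma=\{\g_1,\b_2,\g_2,\b_3,\g_3\}$, $k=2$, $a_{\mathrm{int}}=2$.  From the formula in (\ref{intervals}) one has $\a_3\notin[\g_1,\g_3]$, so the smallest interval of your four types containing $S$ is $[\g_1,\g_4]$, $[\a_1,\g_4]$ or $[\g_1,\a_4]$, each producing a subsurface of genus $3=\ell$ --- not $\le\ell-1$.  Here the proposition holds only through option (2a), via the genus-$3$, two-boundary, non-separating subsurface of Lemma \ref{l:ga}; your dichotomy assigns all $a_{\mathrm{int}}\ge 2$ cases to option (b), so this case is left unproved.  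The paper avoids the problem by allowing the enclosing chain to have $m<|S|$ but not necessarily $m=|\sigma|+a_{\mathrm{end}}$: for this $S$ it would take the $6$-chain $\g_1,\b_2,\g_2,\b_3,\g_3,\b_4$, land in the even-$m$ casework with $k\le\ell$, and conclude with option (a).  To repair your argument you would need either to track the correct genus of the minimal interval (which can be $k+1$ or $k+2$, not merely $k$ or $k\pm1$) and route the offending cases through option (a) when $\ell\le g$, or to observe that the subsurface of Lemma \ref{l:gg} associated to the \emph{spine's} interval $[\g_1,\g_3]$ in fact contains $\a_3$ even though the abstract set $[\g_1,\g_3]$ does not, so that the genus-$2$ bound is still available; either way the argument as written has a hole.
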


\begin{proof} If $S$ is an
$m$-chain  then
we can appeal to Lemma \ref{l:goodChains} except
in the case where $S$ is separating and $|S|=2\ell+1$ is odd. 
In this case $S\subset \llbracket \a_i,\a_j\rrbracket $, by Lemma \ref{l:badChains}, where 
$j=i+\ell-1$. In this case we appeal to Lemma \ref{l:aa}.

If $S$ is not itself a chain, then there is an $m$-chain
$C_1,\dots,C_m$
in $\L$, with $m$ strictly less than $|S|$, such that $S$
is contained in the set $I=\llbracket C_1,C_m\rrbracket \subset \L$, where
$\llbracket C_1,C_m\rrbracket $ is one of the ``interval like"
subsets described in (\ref{intervals}). We analyze each of the possibilities
using the subsurfaces given in Lemmas 
\ref{l:aa} to \ref{l:ga} to 
enclose $U(S)$ in a subsurface of controlled type.

\smallskip

First suppose that $m\le 2\ell -1$ is odd. 

If $I=\llbracket \a_i,\a_j\rrbracket $ then $j\le i+\ell -2$ and $I$ is contained in a
surface of genus at most $(\ell -1)$ with $1$ boundary component (Lemma \ref{l:aa}).

If $I=\llbracket \b_i,\b_j\rrbracket $ then we can enclose it in
$\llbracket \a_i,\a_j\rrbracket $, and $j\le i+\ell -1$. Thus $I$ is contained in a
surface of genus at most $\ell$ with $1$ boundary component  (Lemma \ref{l:aa}).

If $I=\llbracket \g_i,\g_j\rrbracket $ then $j\le i+\ell -1$ and $I$ is contained in a
surface of genus at most $(\ell-1)$ with $3$ boundary components  (Lemma \ref{l:gg}).

If $I=\llbracket \a_i,\g_j\rrbracket $ then $j\le i+\ell -2$ and $I$ is contained in a
surface of genus at most $(\ell-1)$ with $2$ boundary components  (Lemma \ref{l:ag}).

If $I=\llbracket \g_i,\a_j\rrbracket $ then $j\le i+\ell -1$ and $I$ is contained in a
surface of genus at most $(\ell-1)$ with $2$ boundary components.

\smallskip

Now suppose that $m=2 k$ is even.  

If $I=\llbracket \a_i,\b_j\rrbracket $ or $I=\llbracket \b_i,\a_j\rrbracket $ then
we can embed $I$ in $\llbracket \a_i,\a_j\rrbracket $
with $j\le i+k -1$, and this is contained in a
surface of genus at most $k$ with $1$ 
boundary component (Lemma \ref{l:aa}).

If $I=\llbracket \b_i,\g_j\rrbracket $ then we enclose it in $\llbracket \a_i,\g_j\rrbracket $ with
$j\le i+k -1$, which is contained in a
surface of genus at most $k$ with $2$ boundary components (Lemma \ref{l:ag}).

Finally, if 
$I=\llbracket \g_i,\b_j\rrbracket $ then we enclose it in $\llbracket \g_i,\a_j\rrbracket $ with
$j\le i+k$, which is contained in a
surface of genus at most $k$ with $2$ boundary components (Lemma \ref{l:ga}).

\smallskip

To complete the proof of (1) note that if $m=2k$ is even then
$k<\ell$, hence the required estimate.
In case (2) we have $k\le \ell$, so the above
estimates are again sufficient.
\end{proof}

 We write $\Sigma_{h,n}$ to denote the compact connected
 orientable surface of genus $h$ with $n>0$ boundary
 components.

\begin{lemma}\label{l:fit}$\ $
\begin{enumerate}
\item $\Si_{g}$ contains
$\lfloor g/\ell\rfloor$ disjoint subsurfaces homeomorphic  to 
$\Si_{\ell,1}$.
\item  $\Si_{g}$ contains
$\lfloor g/\ell\rfloor$ disjoint non-separating
subsurfaces homeomorphic to $\Si_{\ell-1,3}$.
\item  $\Si_{g}$ contains
$\lfloor (g-1)/\ell\rfloor$  disjoint 
non-separating
subsurfaces homeomorphic to $\Si_{\ell,2}$.
\end{enumerate}
\end{lemma}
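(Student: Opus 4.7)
My plan is to prove each part by explicit construction on the standard picture of $\Sigma_g$ (figure 1), viewing the surface as a row of $g$ handles indexed $1,\dots,g$.

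Part (1) is immediate: set $m=\lfloor g/\ell\rfloor$ and use the $m$ pairwise disjoint separating loops $c_{k\ell+1,(k+1)\ell}$ for $k=0,\dots,m-1$. Each bounds a copy of $\Sigma_{\ell,1}$ enclosing the $k$-th block of $\ell$ consecutive handles, and different blocks use disjoint handles.

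For (2) I would take three disjoint simple closed curves of the form $\alpha_i,\,\alpha_{i+\ell},\,d_{i,i+\ell}$: by the description of $d_{ij}$ earlier in section~\ref{s:lick}, these three curves bound a subsurface of genus $\ell-1$ (containing the $\ell-1$ handles indexed $i+1,\dots,i+\ell-1$) with three boundary components, which is the desired copy of $\Sigma_{\ell-1,3}$. When $g\ge \ell+1$ the complement in $\Sigma_g$ is the connected surface $\Sigma_{g-\ell-1,3}$ (by an Euler-characteristic count), so the subsurface is non-separating. Choosing the indices $i=i_k$ with $i_{k+1}-i_k=\ell$ for $k=0,\dots,m-1$ (with a minor shift to accommodate the endpoints of the range) produces the desired $\lfloor g/\ell\rfloor$ such subsurfaces whose interiors are pairwise disjoint.

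For (3) I set $m=\lfloor(g-1)/\ell\rfloor$, so $m\ell\le g-1$ and the common complement of the $m$ $\Sigma_{\ell,1}$-blocks from part~(1) has genus $g-m\ell\ge 1$. To each block $\Sigma_{\ell,1}^{(k)}$ I would attach a thin band $B_k$ with both endpoints on its boundary loop, where $B_k$ is a regular neighbourhood of a simple arc living in the common complement. Attaching such a band along a single boundary circle splits the circle while preserving the genus, so the resulting subsurface is homeomorphic to $\Sigma_{\ell,2}$; and if the core arc of $B_k$ is non-separating in the complement then the resulting $\Sigma_{\ell,2}$ is non-separating in $\Sigma_g$. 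The main technical step is to arrange the $m$ bands simultaneously so that their core arcs are pairwise disjoint and each non-separating; this is handled routinely by fixing a non-separating simple closed curve $\gamma$ in the complement (which exists since the complement has positive genus) and routing each of the $m$ arcs so as to cross $\gamma$ transversely at a single distinct point, keeping the arcs inside pairwise disjoint thin strips.
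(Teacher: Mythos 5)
Parts (1) and (3) are fine: (1) is the same decomposition the paper uses (just phrased via the explicit $c$-curves of Figure~1), and your band-attachment argument for (3) is a reasonable alternative to the paper's cyclic assembly of two-holed subsurfaces. However, your construction for (2) does not deliver the stated count. In the linear picture of Figure~1, the copy of $\Sigma_{\ell-1,3}$ bounded by $\alpha_i,\,\alpha_{i+\ell},\,d_{i,i+\ell}$ occupies (parts of) handles $i$ through $i+\ell$; placing $m$ such blocks at indices $i,\,i+\ell,\,\dots,\,i+(m-1)\ell$ requires the curves $\alpha_i,\alpha_{i+\ell},\dots,\alpha_{i+m\ell}$ all to exist, so you need $i+m\ell\le g$, and with $i\ge 1$ this gives only $m\le\lfloor(g-1)/\ell\rfloor$. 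That is one short of $\lfloor g/\ell\rfloor$ whenever $\ell$ divides $g$, and no ``minor shift'' of the starting index repairs it: for $g=6$, $\ell=3$, your scheme fits a single non-separating $\Sigma_{2,3}$, while the lemma claims two. This loss is not harmless, because in the proof of Theorem~\ref{t:technical} the estimate coming from Lemma~\ref{l:fit}(2) is fed into Lemma~\ref{l:count} precisely as $(2\ell-1)\lfloor g/\ell\rfloor\ge g$, and $(2\ell-1)\lfloor(g-1)/\ell\rfloor<g$ when $\ell\mid g$ (again $g=6,\ell=3$: $5<6$).

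The underlying issue is that the linear chain of handles has two ends, each of which wastes an $\alpha$-curve. The paper avoids this by working abstractly and \emph{cyclically}: take $q=\lfloor g/\ell\rfloor$ copies $F_1,\dots,F_q$ of $\Sigma_{\ell-1,3}$ with boundary circles $\partial_i^{\pm},\partial_i^0$, glue $\partial_i^+$ to $\partial_{i+1}^-$ with indices read modulo $q$, and then cap the $q$ remaining circles $\partial_i^0$ with a single $\Sigma_{g-q\ell,\,q}$. The cyclic identification has no ends, which is exactly what buys the extra block; and removing any one $F_i$ leaves the remaining $F_j$'s chained to one another and to the cap, so each $F_i$ is non-separating. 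You should either adopt this cyclic decomposition for (2), or else note (as you implicitly did in (3), where you took $m=\lfloor(g-1)/\ell\rfloor$) that your linear construction only produces $\lfloor(g-1)/\ell\rfloor$ copies, and then supply the missing copy separately --- but the cyclic argument is cleaner.
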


\begin{proof} To prove (1), one expresses $\Si_g$
as the connected sum of $\mathbb S^2$ with $q$ disjoint
copies of $\Si_{\ell}$ and one copy
of $\Si_{\ell'}$, where $q=\lfloor g/\ell\rfloor$
and $\ell' = g-q\ell$.

\smallskip

For (3), note first that
given $q$ compact orientable surfaces 
$U_1,\dots,U_q$ of 
genus $h_1,\dots,h_q$, 
each with
$2$ boundary components $\partial U_i =
\partial U_i^+\cup\partial U_i^-$,
 one obtains a closed surface of genus
$1+\sum_i h_i$ by arranging the $U_i$ in
cyclic order and identifying $  \partial U_i^+$
with $\partial U_{i+1}^-$ (indices $\mod q$).

More generally, given an integer
$g\ge 1+\sum_i h_i$, one can 
obtain a closed surface  
of genus $g$ by
including into the above cyclic assembly
an additional surface of genus 
$g-1-\sum_i h_i$  with $2$ boundary
curves.  

Reversing perspective, we deduce from the preceding construction that given a closed surface $\Sigma$
of genus $g$ and  an integer $h$, one
can find $\lfloor \frac{g- 1} h\rfloor$ disjoint,
non-separating subsurfaces of genus $h$, each with
$2$ boundary components. This proves (3).

\smallskip

To prove (2), we assemble $\Si_g$ from
$q=\lfloor g/\ell\rfloor$ copies $F_i$ of $\Si_{\ell-1,3}$ and one copy of $\Si_{\ell',q}$, where
$\ell' = g-q\ell$: label the 3 boundary
components of $F_i$ as $\partial_i^+,\, \partial_i^0,\, \partial_i^-$ and identify $\partial_i^+$ with $\partial_{i+1}^-$
(indices $\mod q$) to obtain an orientable surface of
genus $q(\ell -1)+1$ with $q$ boundary components, then
cap-off the boundary by attaching  $\Si_{\ell',q}$.

\smallskip

In each of the above constructions, the relevant  subsurfaces that we described
are not quite disjoint --- they intersect along their boundaries ---
but this is remedied by an obvious retraction of each subsurface away from its boundary. \end{proof}

\subsection{A numerical lemma}

\begin{lemma}\label{l:count} Let $g\ge 1$ and $k\in [2, 2g]$ be integers.
If $k$ is even then $ (k-1) \lfloor 2g/k\rfloor \ge g$. If $k$ is
odd then $(k-1)\lfloor 2(g-1)/(k-1)\rfloor \ge g$.
 \end{lemma}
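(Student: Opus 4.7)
The plan is to handle the two parities of $k$ separately, reducing each to the division algorithm and exploiting the constraint $k\le 2g$.

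For the even case, write $k=2\ell$ with $1\le \ell\le g$, and write $g=q\ell+r$ with $0\le r\le \ell-1$, so that $\lfloor 2g/k\rfloor=\lfloor g/\ell\rfloor=q$. Then the desired inequality $(k-1)\lfloor 2g/k\rfloor\ge g$ becomes $(2\ell-1)q\ge q\ell+r$, i.e. $q(\ell-1)\ge r$. When $\ell=1$ this is $0\ge 0$; for $\ell\ge 2$, the bound $k=2\ell\le 2g$ forces $q\ge 1$, hence $q(\ell-1)\ge \ell-1\ge r$.

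For the odd case, write $k=2\ell+1$; since $k\ge 3$, $\ell\ge 1$, and since $k\le 2g$ with $k$ odd, $2\ell+1\le 2g-1$, so $\ell\le g-1$. Write $g-1=q\ell+r$ with $0\le r\le \ell-1$, so $\lfloor 2(g-1)/(k-1)\rfloor=\lfloor (g-1)/\ell\rfloor=q$. The desired inequality becomes $2\ell q\ge g=q\ell+r+1$, i.e. $\ell q\ge r+1$. Since $\ell\le g-1$ we have $q\ge 1$, so $\ell q\ge \ell\ge r+1$, as required.

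Both subcases are purely arithmetical; the only real point to monitor is that the upper bound $k\le 2g$ in the hypothesis is exactly what guarantees $q\ge 1$ in the quotient-remainder expression, without which the argument would break down when $r$ is maximal. Beyond that, there is no obstacle — the lemma follows by direct substitution once the cases are split by parity.
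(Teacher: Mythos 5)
Your proof is correct, and it is essentially the paper's argument repackaged: the paper notes that $\phi_k(x)=(k-1)\lfloor 2x/k\rfloor$ is a step function and tests the extremal right-hand point of each step (which amounts to taking the maximal remainder and arguing by contradiction), whereas you parametrize $g$ (resp.\ $g-1$) by its quotient and remainder mod $\ell$ and verify the inequality directly. Both reduce to the same arithmetic, with your division-algorithm bookkeeping making the role of the hypothesis $k\le 2g$ in forcing $q\ge 1$ a bit more transparent.
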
 
 
 \begin{proof} Fix $k$ even. For every positive integer $s$,
 the function $\phi_k(x)= (k-1) \lfloor 2x/k\rfloor$ is constant on $[sk/2, (s+1)k/2)$, so if the inequality
 $\phi_k(g)  \ge g$ were to fail for some
  $g\ge k/2$ then it would fail
 at $g=\frac 1 2 (s+1)k -1$, with $s\ge 1$. But at this value
 $\phi_k(g) = (k-1)s$, and $(k-1)s < \frac 1 2 (s+1)k -1$
 implies $s(k-2) < k-2$, which is nonsense.
 
 The proof for $k$ odd is similar.
 \end{proof}
 
\section{Proof of the Main Theorems}\label{s:proofs}

We shall deduce Theorems \ref{t:main} and \ref{t:K_g} from
the following more technical result.

\begin{defn} A 
{\em{handle-separating loop}} on a closed orientable
surface $\S_g$ of genus $g\ge 2$ is a simple closed curve $c$
such that one of the components of $\S_g\ssm c$ has genus $1$.
\end{defn}

\begin{thm}\label{t:technical}
 If the mapping class group $\modsg$ of a
closed orientable surface of genus $g\ge 2$ acts by isometries
on a complete {\rm{CAT}}$(0)$ space $X$ of 
dimension less than $g$
and the Dehn twist in a handle-separating loop fixes a point
of $X$, then $\modsg$ fixes a point of $X$.
\end{thm}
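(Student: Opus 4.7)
The plan is to combine the Conjugate Bootstrap (Corollary \ref{c:conjug}) with the subsurface topology developed in Section \ref{s:lick} so as to promote the single fixed point provided by the hypothesis to a common fixed point for all of $\modsg$. Since the Lickorish generators $\L$ generate $\modsg$, it is enough to produce a common fixed point for $\{T_\gamma\mid \gamma\in\L\}$, and I would establish this by induction on $k$: for every connected subset $S\subseteq\L$ with $|S|=k$, the subgroup $H(S)$ generated by $\{T_\gamma\mid \gamma\in S\}$ admits a fixed point in $X$. The conclusion for disconnected subsets follows automatically via Corollary \ref{c:commutes}, since such a subset decomposes into components supported in pairwise disjoint subsurfaces, giving commuting subgroups each with a fixed point.

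For the inductive step from $k$ to $k+1$, consider $S\subseteq\L$ connected with $|S|=k+1$. By Proposition \ref{p:size}, $U(S)$ is contained in a subsurface $\Sigma'\subseteq \Sigma_g$ whose topological type is tightly controlled — genus roughly $(k+1)/2$, at most three boundary components, and connected complement in every case listed. Lemma \ref{l:fit} produces $n$ mutually disjoint copies of $\Sigma'$ inside $\Sigma_g$, and Lemma \ref{l:disjoint} then yields $n$ mutually commuting conjugates of $S$ in $\modsg$. Lemma \ref{l:count} verifies the arithmetic inequality $nk\ge g$, so $nk>\dim X$; Corollary \ref{c:conjug}, applied with these conjugates and using the inductive hypothesis that every $k$-subset of $S$ has a common fixed point, then yields a fixed point for all of $H(S)$.

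The base case $k=1$ — each Dehn twist $T_\gamma$ for $\gamma\in\L$ fixes a point in $X$ — is the main obstacle and is the sole place the handle-separating hypothesis enters. Since every non-separating simple closed curve in $\Sigma_g$ lies in a single $\modsg$-orbit, it suffices to produce a fixed point for one such twist, say $T_{\a_1}$. I would exploit the chain relation $(T_{\a_1}T_{\b_1})^6=T_c$, where $c$ is a handle-separating loop cobounding the regular neighbourhood of the 2-chain $\a_1,\b_1$: this places $T_c$ inside $\langle T_{\a_1},T_{\b_1}\rangle$. Combined with $\fix(T_c)\neq\emptyset$ and the fact that $\Sigma_g$ contains $g$ mutually disjoint 2-chains $\{\a_i,\b_i\}$ (one in each handle), giving $g$ mutually commuting conjugates of the pair $\{T_{\a_1},T_{\b_1}\}$, I expect a Bootstrap-style argument phrased through Corollary \ref{c:join} and Theorem \ref{t:helly} in dimension $<g$ to deliver a common fixed point for the subgroup generated by $\a_1,\b_1$, and in particular for $T_{\a_1}$.

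A backup route for the base case is to argue by induction on $g$: on the closed convex subspace $\fix(T_c)\subseteq X$, which is itself a complete CAT$(0)$ space of dimension $<g$, the centralizer $Z_{\modsg}(T_c)$ acts, and this centralizer contains the mapping class group of the complementary subsurface $\Sigma_{g-1,1}$; invoking the theorem at smaller genus on $\fix(T_c)$ (having checked that the handle-separating hypothesis propagates to the smaller action) would then produce fixed points for the Lickorish generators supported away from $c$, after which Corollary \ref{c:commutes} and homogeneity of non-separating twists finish the base case.
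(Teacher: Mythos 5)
Your overall architecture matches the paper exactly: you reduce to showing that the twist subgroup $T(S)$ has a fixed point for every subset $S\subseteq\L$ of Lickorish curves, you split the induction on $|S|$ into the disconnected case (handled by Corollary \ref{c:commutes}) and the connected case (handled by Proposition \ref{p:size}, Lemma \ref{l:fit}, Lemma \ref{l:disjoint}, the Conjugate Bootstrap and Lemma \ref{l:count}). That portion of your argument is correct and is the paper's argument. Two smaller omissions: the paper first disposes of $g=2$ by noting $\Mod(\Sigma_2)$ has property $\FR$ (Culler--Vogtmann), so that Lemma \ref{l:fit}(3) with $g\ge 3$ always yields at least two copies; and the induction must run on all subsets $S\subseteq\L$, not only connected ones, so that the inductive hypothesis feeds the Conjugate Bootstrap correctly.

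The genuine gap is in your base case, which you correctly identify as the crux. Your route (a) as written is circular: to invoke the Conjugate Bootstrap for the $g$ commuting conjugates of $\{T_{\alpha_1},T_{\beta_1}\}$, you need $1$-element subsets to have fixed points already, i.e.\ the ellipticity of $T_{\alpha_1}$ and $T_{\beta_1}$ --- which is precisely what you are trying to establish. Nor can you apply Corollary \ref{c:torsion} to the commuting subgroups $M_i=\langle T_{\alpha_i},T_{\beta_i}\rangle$ directly, since these are not generated by torsion (Dehn twists have infinite order). The missing idea is the paper's Lemma \ref{l:2loops}: take the $g$ disjoint genus-one handles $W_i$ with handle-separating boundaries $C_i$; the commuting elliptics $T_{C_1},\dots,T_{C_g}$ have a non-empty, closed, convex common fixed set $F$, itself a complete CAT$(0)$ space of dimension $<g$, on which $D=M_1\times\dots\times M_g$ acts. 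Because each $T_{C_i}$ is central in $M_i$ and acts trivially on $F$, the $D$-action on $F$ factors through $\prod_i M_i/\langle T_{C_i}\rangle\cong\prod_i{\rm SL}(2,\Z)$, and ${\rm SL}(2,\Z)$ \emph{is} generated by torsion elements. Now Corollary \ref{c:torsion} applies and gives a fixed point for some $M_i$, hence (by conjugacy) for $M_W$, which contains $T_C$ and $T_{C'}$ for any pair with $|C\cap C'|=1$. This establishes the base case $|S|\le 2$ in one stroke. Your route (b) is in the right spirit --- restricting to $\fix(T_c)$ is exactly the paper's move --- but as a genus induction it does not close cleanly: the centralizer acts through a mapping class group of a bounded subsurface, not a closed one of lower genus, and it is not clear that the handle-separating hypothesis transfers; the paper avoids this by working instead with the explicit torsion generation of ${\rm SL}(2,\Z)$ and Corollary \ref{c:torsion}.
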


\begin{lemma}\label{l:2loops}
Let $g\ge 2$ and consider an action of $\modsg$
by isometries on a complete
\cat space $X$ of dimension less than $g$.
Let $C_0$ be a handle-separating
loop on $\Si_g$.

If the Dehn twist about $C_0$ has a fixed point in $X$,
then so does the subgroup generated by the Dehn twists in any pair of
simple closed curves $C,C'$ on $\Si_g$ with $|C\cap C'|=1$.
\end{lemma}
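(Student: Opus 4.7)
The plan is to pass to the common fixed set of several mutually commuting handle-separating Dehn twists and then apply Corollary \ref{c:torsion}, exploiting the fact that $\langle T_C,T_{C'}\rangle$ modulo its natural central element is generated by torsion.

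Since $|C\cap C'|=1$, a regular neighborhood of $C\cup C'$ is a subsurface $T\cong\Sigma_{1,1}$ whose single boundary component $D$ is a handle-separating loop, and the classical chain relation in the mapping class group of a once-bounded torus gives $T_D=(T_CT_{C'})^6\in G:=\langle T_C,T_{C'}\rangle$, a central element of $G$. Since $D$ is $\Mod(\Sigma_g)$-conjugate to $C_0$, the hypothesis implies that $T_D$ fixes a point of $X$. Next I would exhibit $g$ pairwise disjoint subsurfaces $T_1=T,T_2,\dots,T_g$ of $\Sigma_g$, each homeomorphic to $\Sigma_{1,1}$, with connected total complement. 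In each $T_i$ choose a $2$-chain $a_i,b_i$ (with $(a_1,b_1)=(C,C')$), set $G_i:=\langle T_{a_i},T_{b_i}\rangle$, and let $D_i:=\partial T_i$. By Lemma \ref{l:disjoint} the $G_i$ are pairwise commuting conjugates of $G$, and each $T_{D_i}$ is a handle-separating Dehn twist which fixes a point of $X$. Since the $T_{D_i}$ pairwise commute, iterating Corollary \ref{c:commutes} shows that the set $F:=\bigcap_{i=1}^g\fix(T_{D_i})$ is non-empty, closed, and convex, with $\dim F\le\dim X<g$.

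Each $G_i$ preserves $F$, and the central element $T_{D_i}\in G_i$ acts trivially on $F$, so the $G_i$-action on $F$ factors through $\Gamma_i:=G_i/\langle T_{D_i}\rangle$. The algebraic crux is that $\Gamma_i$ is generated by elements of finite order. Because the inclusion $\Sigma_{1,1}\hookrightarrow\Sigma_g$ induces an injection on mapping class groups (as $D$ does not bound a disk in the complement $\Sigma_{g-1,1}$), $G_i$ is isomorphic to the braid group $B_3=\langle\sigma_1,\sigma_2\mid\sigma_1\sigma_2\sigma_1=\sigma_2\sigma_1\sigma_2\rangle$ with centre $\langle\Delta^2\rangle=\langle(\sigma_1\sigma_2)^3\rangle$; since $T_{D_i}$ corresponds to $\Delta^4=(\sigma_1\sigma_2)^6$, the quotient $\Gamma_i\cong B_3/\langle\Delta^4\rangle$ is generated by the images of $\Delta$ and $\sigma_1\sigma_2$, which have orders $4$ and $6$ respectively (and indeed $\Gamma_i\cong\mathrm{SL}(2,\mathbb{Z})\cong\mathbb{Z}/4\ast_{\mathbb{Z}/2}\mathbb{Z}/6$).

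Finally, Corollary \ref{c:torsion} applies to the commuting groups $\Gamma_1,\dots,\Gamma_g<\isom(F)$ acting on the complete CAT$(0)$ space $F$ of dimension less than $g$: at least one $\Gamma_{i_0}$ fixes a point of $F$, which is therefore fixed by $G_{i_0}$ in $X$. Conjugating by a homeomorphism of $\Sigma_g$ that carries $T_{i_0}$ onto $T$ (and $(a_{i_0},b_{i_0})$ onto $(C,C')$) produces a fixed point for $G=\langle T_C,T_{C'}\rangle$, as required. The main obstacle is the algebraic identification of $\Gamma_i$ as generated by torsion, which depends on knowing the chain relation $(T_CT_{C'})^6=T_D$ and the standard amalgamated presentation of $\mathrm{SL}(2,\mathbb{Z})$; the rest is routine bookkeeping of the Helly-style bootstrap.
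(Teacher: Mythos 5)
Your proof is correct and follows essentially the same route as the paper's: decompose $\Sigma_g$ into $g$ disjoint genus-one subsurfaces with connected complement, pass to the common fixed set $F$ of the boundary Dehn twists (each conjugate to $T_{C_0}$, hence elliptic), observe that each subsurface group acts on $F$ through a quotient isomorphic to $\mathrm{SL}(2,\mathbb{Z})$ and so is generated by torsion, and conclude via Corollary~\ref{c:torsion} plus conjugacy. The only cosmetic difference is that you identify the subsurface groups and the central element explicitly via the $B_3$ presentation and the chain relation $(T_CT_{C'})^6=T_D$, whereas the paper invokes the mapping class group of the once-punctured torus directly; these are the same algebra.
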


\begin{proof} Each pair of
simple closed curves $C,C'$ on $\Si_g$ 
with $|C\cap C'|=1$ is contained in a subsurface 
$W$ of genus 1
with one boundary component. Thus it will be enough to show that
there is a point fixed by the subgroup $M_W
\subset\modsg$ consisting of the mapping classes
of homeomorphisms that stabilize $W$ and act
trivially on its complement.

As in the proof of Lemma \ref{l:fit}, we express
$\Si_g$ as the union of $g$ subsurfaces $W_i$ of
genus $1$, each with $1$ boundary component,
and a sphere with $g$ discs deleted. Let
$C_1,\dots,C_g$ be the boundary curves of
these subsurfaces. Each of the $C_i$ lies in the $\modsg$-orbit of $C_0$ and hence the Dehn twist
$T_{C_i}$ 
is an elliptic isometry of $X$. Since the loops $C_i$
are disjoint, these Dehn twists commute and hence
the set $F$ of points fixed by all of them is non-empty
(corollary \ref{c:commutes}).
 $F\subset X$ is closed and convex, and hence is itself
a complete \cat\ space of dimension less than $g$.

The centralizer of $\langle T_{C_1},\dots,T_{C_g}
\rangle$ in $\modsg$ preserves $F$. This
centralizer
 contains $D=M_1\times\dots\times M_g$, 
 where for brevity we have written
 $M_i$ in place of $M_{W_i}$. 
 Note that each $M_i$
 is conjugate to $M_W$.
 
 We are interested
 in the action of $D$ on $F\subset X$. Since the
 $T_{C_i}$ act trivially on $F$, this action factors
 through the product of the groups
   $M_i/\langle T_{C_i}\rangle$, each of which
     is isomorphic to the mapping class group
 of a genus $1$ surface with one puncture.
 This last group is isomorphic to
 ${\rm{SL}}(2,\Z)$, which is generated by a pair
 of torsion elements. Thus  Corollary \ref{c:torsion} 
 tells us that one of the $M_i/\langle T_{C_i}\rangle$
 has a fixed point in $F$. Hence $M_i$ has a fixed
 point in $F\subset X$. And since $M_W$ is conjugate to $M_i$
 in $\modsg$,
  it too has a fixed point in $X$.
 \end{proof}
 
\subsection{The proof of Theorem \ref{t:technical}}
A complete \cat\ space
of dimension $1$ is an $\mathbb R$-tree.
Culler and Vogtmann  \cite{CV} proved that if $g\ge 2$ then $\modsg$ has a fixed point whenever
it acts by isometries on an $\mathbb R$-tree.
Thus we may assume that $g\ge 3$.

Let $X$ be a complete \cat space of 
dimension less than $g$  on which $\modsg$ acts
by isometries so that the Dehn twist in a handle-separating
loop fixes a point of $X$.
We write $T(S)$ to denote the subgroup of $\modsg$ generated by the 
Dehn twists in a set of loops $S\subset\L$.
We must show that
$\modsg$ fixes a point of $X$. This is equivalent to showing
that $T(S)$ has a fixed point for every subset $S\subseteq\L$.

We proceed by induction, considering
a subset $S\subset\L$ such that $T(S')$ has a fixed point for
all  subsets $S'\subset\L$ of cardinality
less than $|S|$.  Lemma \ref{l:2loops}
covers the case $|S|\le 2$.

Suppose first that $S$ is not connected, say
$S=S_1\cup S_2$ with $U(S_1)\cap U(S_2)=\emptyset$.
The subgroups $T(S_1)$ and $T(S_2)$ 
commute, and each has a
fixed point since $|S_i|<|S|$, so
Corollary \ref{c:commutes} tells us that $T(S)$ 
has a fixed point. 

Suppose now that $S$ is connected. 
If $|S|=2\ell$ is even then Proposition \ref{p:size} tells us
that $U(S)$ is contained either in a subsurface of genus
$\ell$ with $1$ boundary component or else in a 
non-separating  subsurface of genus
$\ell-1$ with $3$ boundary components. Lemma \ref{l:fit}
tells us that in either case
 one can fit $\lfloor g/\ell\rfloor$ disjoint
copies of this subsurface into $\Si_g$.
Lemma \ref{l:disjoint} then provides us with $\lfloor g/\ell\rfloor$ 
mutually-commuting conjugates of $T(S)$. 
As all proper subsets of $S$ are assumed to have a fixed point,
the Conjugate Bootstrap (Corollary \ref{c:conjug}) tells us
that $T(S)$ will have a fixed point provided that the dimension of
$X$ is less than $(2\ell -1)\lfloor g/\ell\rfloor$. And since we are
assuming that $\dim X < g$, Lemma \ref{l:count} tells us
that this is the case.

The argument for $|S|$ odd is entirely similar. $\qed$

\subsection{The proof of Theorem \ref{t:main}}\label{s:last}

The following lemma explains why Theorem \ref{t:main} 
is a consequence of Theorem \ref{t:technical} and the fact that 
$\Mod(\Si_2)$ has property $\FR$, by \cite{CV}.

\begin{lemma}\label{l:1loop}
If $g\ge 3$, then whenever $\modsg$ acts by isometries on a 
complete \cat\ space,  the Dehn twist $T$ in
each simple closed curve  either fixes a point or acts as a neutral parabolic
 (i.e. $T$ is not {\em{ballistic}} in the terminology of \cite{CM}).
\end{lemma}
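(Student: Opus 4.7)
The plan is to invoke Proposition~\ref{l:hyp} together with Remark~\ref{r:km}, which assert that any ballistic (i.e.\ hyperbolic or non-neutral parabolic) isometry $\gamma$ of a complete {\rm CAT}$(0)$ space has infinite image in the abelianization of its centralizer. Taking contrapositives, the lemma reduces to the following purely group-theoretic statement: for every simple closed curve $c$ on $\Sigma_g$ with $g \ge 3$, the Dehn twist $T_c$ has finite order in the abelianization of $Z_{\modsg}(T_c)$. Since any Dehn twist in a curve disjoint from $c$ commutes with $T_c$, it automatically lies in $Z_{\modsg}(T_c)$, so relations in $\modsg$ among twists in curves disjoint from $c$ are themselves relations in the centralizer.

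For $c$ non-separating I would exploit the lantern relation. Since $g\ge 3$, one can embed a four-holed sphere $P\subset\Sigma_g$ with $c$ as one boundary component, three other non-separating boundary curves $d_1,d_2,d_3$, and three interior curves $x_1,x_2,x_3\subset P$ satisfying
\[
T_c\,T_{d_1}T_{d_2}T_{d_3}=T_{x_1}T_{x_2}T_{x_3}.
\]
All seven Dehn twists lie in $Z_{\modsg}(T_c)$. Choosing $P$ so that $\Sigma_g\setminus P$ is suitably symmetric in its three non-$c$ boundary components, and further arranging that some $d_i$ and some $x_j$ lie in the same $Z_{\modsg}(T_c)$-orbit of simple closed curves, the resulting conjugations collapse the six non-$T_c$ twists to a single class in $Z_{\modsg}(T_c)^{\rm ab}$. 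The lantern relation then forces $T_c=0$ in $Z_{\modsg}(T_c)^{\rm ab}$.

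For $c$ separating, write $\Sigma_g\setminus c=\Sigma_{h,1}\sqcup\Sigma_{g-h,1}$ with $h\le g-h$. The centralizer $Z_{\modsg}(T_c)$ contains the amalgamated product of $\Mod(\Sigma_{h,1})$ and $\Mod(\Sigma_{g-h,1})$ over their central boundary twists, each identified with $T_c$, so the abelianization of this amalgam is the pushout of the two abelianizations. When $g-h\ge 3$ the group $\Mod(\Sigma_{g-h,1})$ is perfect, so $T_c$ already vanishes in that factor's abelianization and is therefore torsion in the pushout. For the remaining low-genus cases I would combine the $6$-chain relation $(T_aT_b)^6=T_c$ inside the genus-$1$ side (when $c$ is handle-separating) with the known finiteness of $\Mod(\Sigma_{2,1})^{\rm ab}$ to conclude that $T_c$ maps to a torsion element in the pushout abelianization.

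The principal obstacle is the genus-$3$ regime. In the non-separating subcase, the room inside $\Sigma_3$ is tight, and one must construct the symmetric four-holed-sphere neighborhood explicitly and verify that the required conjugating mapping classes actually fix $c$; in the separating subcase, both complementary pieces $\Sigma_{1,1}$ and $\Sigma_{2,1}$ have non-trivial abelianization, so the pushout calculation requires care, possibly supplemented by the $\mathbb{Z}/2$-swap of the two sides in symmetric situations, to see that $T_c$ is nonetheless torsion in $Z_{\modsg}(T_c)^{\rm ab}$.
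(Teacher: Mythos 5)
Your reduction is exactly the one the paper uses: by Proposition~\ref{l:hyp} and Remark~\ref{r:km}, it suffices to show that for $g\ge 3$ every Dehn twist $T_c$ has finite order in the abelianization of its centralizer $Z_{\modsg}(T_c)$. At that point the paper simply cites the known fact that this abelianization is actually \emph{finite} (Korkmaz~\cite{korkmaz}; \cite{mrb:bill}, Prop.~2) and stops. You instead try to prove the weaker torsion statement from scratch; the outline is reasonable but contains one structural error and one genuine unfilled gap.

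The structural error is in the separating case. The subgroup of $Z_{\modsg}(T_c)$ generated by mapping classes supported in the two sides of $c$ is \emph{not} an amalgamated free product of $\Mod(\Sigma_{h,1})$ and $\Mod(\Sigma_{g-h,1})$ over the boundary twist: the two pieces have disjoint support and hence commute, so this subgroup is a central quotient of the direct product $\Mod(\Sigma_{h,1})\times\Mod(\Sigma_{g-h,1})$ by the relation identifying the two boundary twists. Luckily the abelianization of that central quotient is still the pushout $\Mod(\Sigma_{h,1})^{\rm ab}\oplus_{\Z}\Mod(\Sigma_{g-h,1})^{\rm ab}$ you want, so your calculation survives. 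In fact it simplifies: $g\ge3$ forces $g-h\ge2$, and $\Mod(\Sigma_{n,1})^{\rm ab}$ is finite for every $n\ge2$ (trivial for $n\ge3$, cyclic of order $10$ for $n=2$), so the image of $T_c$ in the pushout is automatically torsion. You do not need the chain relation, the $\Z/2$ side-swap, or a separate low-genus analysis in the separating case.

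The genuine gap is the non-separating case. The plan is to find a lantern $P$ with $c=\partial_0P$, with the other six curves $d_1,d_2,d_3,x_1,x_2,x_3$ all disjoint from $c$, and with enough symmetry fixing $c$ to collapse the six non-$c$ twists to a single class in $Z_{\modsg}(T_c)^{\rm ab}$, so that the lantern relation yields $\bar T_c=0$. You have not exhibited the conjugating mapping classes, and you correctly flag that this is delicate when $g=3$, where the complement of $P\cup c$ has little room. This is the crux of the case and must be carried out in full; as it stands the non-separating argument is a sketch, not a proof. A less symmetry-dependent alternative would be to cut along $c$ and show directly that the two boundary twists are torsion in $\Mod(\Sigma_{g-1,2})^{\rm ab}$ for $g\ge3$, which avoids having to produce explicit lantern symmetries preserving $c$.
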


\begin{proof} The abelianisation of the centralizer of $T$ in $\modsg$ is finite (cf.~\cite{korkmaz}
and \cite{mrb:bill} Proposition 2),
so Proposition  \ref{l:hyp} and Remark \ref{r:km} imply that the translation number of $T$ is
zero.
\end{proof}

\subsection{The proof of Theorem \ref{t:K_g}}

The Dehn twists in separating curves lie in the kernel of
the natural map $\modsg\to {\rm{Out}}(\Sigma_g/\gamma_3(\Sigma_g))$.
(Johnson \cite{jo} proved that they generate the kernel
but we do not need that.) Thus Theorem \ref{t:K_g} is an immediate
consequence of Theorem \ref{t:technical}. $\qed$
 
\section{Constraints on the representation theory of $\modsg$}\label{s:reps}

In Section I.6.2  of \cite{serre} Serre proves that if a countable group $G$
has property FA and if $k$ is a field, then for every representation $\rho: G\to {\rm{GL}}(2, k)$
and every $g\in G$, the eigenvalues of $\rho(g)$ are integral over $\Z$, i.e.~each is the solution
of a monic polynomial with integer coefficients. For the convenience of the
reader we reproduce Serre's argument, making the slight modifications needed to prove
the following generalisation (which is well known to experts).
We refer to the seminal paper of Bruhat and Tits for
the theory of affine buildings \cite{BT}; see also Chapter 9 of \cite{ronan} and Chapter VI of \cite{brown2}.

\begin{prop}\label{p:serre}
If $G$ is a finitely generated group that has a fixed point whenever it acts by semisimple
isometries on a complete \cat space of dimension less than $n$,
and  $k$ is a field, then for every representation $\rho: G\to {\rm{GL}}(n, k)$
and every $g\in G$, the eigenvalues of $\rho(g)$ are integral over $\Z$.
\end{prop}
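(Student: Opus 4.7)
The plan is to adapt Serre's argument for $n=2$ (\cite{serre} \S I.6.2, which uses the Bass--Serre tree) to higher rank by replacing the tree with the affine Bruhat--Tits building of $\mathrm{PGL}_n$. First I would replace $k$ with the subfield $k_0 \subseteq k$ generated by the entries of $\rho(s)^{\pm 1}$ as $s$ ranges over a finite generating set of $G$; this $k_0$ is finitely generated and $\rho$ factors through $\mathrm{GL}(n, k_0)$. Since an element of $\overline{k_0}$ is integral over $\Z$ iff it lies in the valuation ring of every discrete valuation of $\overline{k_0}$ that is non-negative on $\Z$, it suffices to fix such a valuation $v$ of $k_0$ and to show, after extending $v$ to an algebraic closure, that each eigenvalue $\lambda$ of $\rho(g)$ satisfies $v(\lambda) \ge 0$.

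Let $K_v$ be the completion of $k_0$ at $v$ and let $B_v$ be the affine Bruhat--Tits building of $\mathrm{PGL}(n, K_v)$. It is a complete {\rm{CAT}}$(0)$ polyhedral complex of dimension $n-1$ with only finitely many isometry types of cells, so by \cite{mrb:ss} its cellular automorphisms act by semisimple isometries. Composing $\rho$ with $\mathrm{GL}(n, k_0) \hookrightarrow \mathrm{GL}(n, K_v) \twoheadrightarrow \mathrm{PGL}(n, K_v)$ yields a semisimple isometric action of $G$ on $B_v$, and since $\dim B_v = n-1 < n$ the hypothesis provides a global fixed point. Consequently $\rho(G)$ has bounded image in $\mathrm{PGL}(n, K_v)$. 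Applied to the powers of $\rho(g)$, this forces every ratio $\lambda_i/\lambda_j$ of eigenvalues to have $v$-valuation zero, so all eigenvalues of $\rho(g)$ share a common valuation $c$; the identity $\det \rho(g) = \prod_i \lambda_i$ then gives $v(\det \rho(g)) = nc$.

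To finish I would kill the common scalar $c$. The map $g \mapsto v(\det \rho(g))$ is a homomorphism $G \to \Z$, equivalently a semisimple isometric action of $G$ on $\R$. When $n \ge 2$ this is an action on a complete {\rm{CAT}}$(0)$ space of dimension $1 < n$, so the hypothesis forces a fixed point and the homomorphism is trivial. Hence $c = 0$, every eigenvalue of $\rho(g)$ is a $v$-unit, and varying $v$ yields the integrality claim. (The case $n=1$ is vacuous.)

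The main obstacle is this last step. Because $B_v$ is the building of $\mathrm{PGL}_n$ rather than $\mathrm{GL}_n$, the fixed-point data in dimension $n-1$ only controls $\rho(g)$ modulo scalars, so a separate argument is needed to pin down the determinant. The trick is to reapply the fixed-point hypothesis in dimension $1$ to kill every homomorphism $G \to \Z$; this is just enough to force the common eigenvalue valuation to vanish without enlarging the CAT(0) space beyond what the hypothesis allows.
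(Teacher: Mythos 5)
Your proposal is correct and rests on the same two ingredients as the paper's proof --- the $(n-1)$-dimensional affine Bruhat--Tits building, and the fact that the fixed-point hypothesis in dimension $1$ kills every homomorphism $G\to\Z$ --- but you deploy them in the opposite order, which gives a genuinely different decomposition of the argument. The paper first invokes the one-dimensional case to see that $G$ is perfect (so $\rho(G)\subset\ker(v\circ\det)$), then applies the building argument once to conjugate $\rho(G)$ into ${\rm{GL}}(n,\mathcal O_v)$, so the coefficients of each characteristic polynomial lie in $\bigcap_v\mathcal O_v$, which is the integral closure of $\Z$ in $k_\rho$; integrality of the eigenvalues is then immediate by transitivity. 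You instead pass to ${\rm{PGL}}(n,K_v)$, use the building to conclude that all eigenvalues of $\rho(g)$ share a common valuation $c(g)$, and only afterwards use the one-dimensional hypothesis to kill the homomorphism $g\mapsto v(\det\rho(g))$ and hence force $c(g)=0$. Your ordering dispenses with the preliminary perfectness observation; the paper's ordering gives a cleaner endgame, since it works entirely with the characteristic-polynomial coefficients inside $k_\rho$ and cites the identity $\bigcap_v\mathcal O_v = $ integral closure of $\Z$, whereas you have to reason about extending valuations to algebraic closures. On that point, your phrase ``discrete valuation of $\overline{k_0}$'' is not quite right, since a valuation on an algebraically closed field has divisible value group; you mean a discrete valuation of $k_0$ (or a finite extension), suitably extended. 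Finally, note that for $n=1$ the hypothesis is vacuous but the conclusion genuinely fails (e.g.\ $\Z\to\mathbb Q^*$, $1\mapsto 1/2$), so ``vacuous'' is the wrong word --- the proposition is implicitly restricted to $n\ge 2$, which is all that is needed.
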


\begin{proof} The subfield $k_\rho\subset k$  generated by the entries of $\rho(G)$ is finitely
generated over the prime field $\mathbb Q$ or $\mathbb F_p$. Let $v$ be a discrete valuation
on $k_\rho$ --- i.e.~ an epimorphism $k_\rho^*\to \Z$ such that $v(xy)=v(x)v(y)$
and $v(x+y)\ge \min\{v(x),v(y)\}$
for all $x,y\in k_\rho$, with the convention that $v(0)=+\infty$.
The corresponding valuation ring is $\mathcal O_v=\{x\in k_\rho \mid v(x)\ge 0\}$. 

$G$ is perfect, since it can't act freely on a line by isometries, and hence it lies in the
kernel  of $v\circ {\rm{det}}: {\rm{GL}}(n, k_\rho)\to\Z$. This kernel acts by isometries
on the geometric realisation of a Bruhat-Tits building of affine type; in this action each element that fixes a point
fixes a simplex pointwise.
This building is a complete \cat\ space of dimension $n-1$,
so $\rho(G)$ fixes a simplex. Hence $\rho(G)$ is conjugate in ${\rm{GL}}(n, k_\rho)$ to a subgroup of
${\rm{GL}}(n, \mathcal O_v)$, since each vertex stabilizer is. 
It follows that for each $g\in G$ the coefficients of the 
characteristic polynomial of $\rho(g)$ lie in the intersection $\bigcap_v\mathcal O_v$.
This intersection is precisely the set of elements of $k_\rho$ that are integral over $\Z$ (see \cite{ega},
p.140,  corollary 7.1.8). Hence, by the transitivity of integrality, the eigenvalues of $\rho(g)$ are integral
over $\Z$.
\end{proof}

Bass, \cite{bass} Proposition 5.3,
proves that  a finitely generated  group satisfying the conclusion of Proposition \ref{p:serre} has  only finitely many
conjugacy classes of irreducible representations $G\to {\rm{GL}}(n,K)$ for an arbitrary algebraically closed
field $K$.

\end{document}